%% file: main.tex
\documentclass[11pt]{article}
\input{Macros}

\newcommand{\authorinfo}{%
\par\bigskip
\begingroup
\footnotesize

\noindent
\textsc{Liang Chang}, School of Mathematical Sciences and LPMC, Nankai University, Tianjin, China\\
\textit{Email address:}
\texttt{changliang996@naankai.edu.cn}

\medskip
\noindent 
\textsc{Siu-Hung Ng}, Department of Mathematics, Baton Rouge, LA 70803, USA\\
\textit{Email address:}
\texttt{rng@math.lsu.edu}

\medskip
\noindent 
\textsc{Yilong Wang}, Beijing Institute of Mathematical Sciences and Applications (BIMSA), Beijing, China\\
\textit{Email address:}
\texttt{wyl@bimsa.cn}

\endgroup
}
\title{\textbf{Modular fusion categories with trivial Torelli group actions}}
\author{Liang Chang, Siu-Hung Ng, Yilong Wang}
\date{}
\begin{document}

\maketitle

\begin{abstract}
In this paper, we study modular fusion categories whose mapping class group representations are trivial on the Torelli groups, with particular emphasis on the congruence properties of the resulting representations of $\Sp(2g,\bZ)$. We prove that, for any $g \ge 3$, the Torelli group is contained in the kernel of the genus-$g$ mapping class group representation $\rho_g$ associated with a modular fusion category $\CC$ if and only if $\CC$ is pointed. This refines the corresponding result in \cite{MW25}. We also characterize the modular fusion categories for which the genus-$2$ Torelli group is contained in $\ker \rho_2$; in particular, categories with isotropic adjoint subcategories belong to this class. Finally, we give a complete classification of modular fusion categories with isotropic adjoint subcategories.
\end{abstract}
\tableofcontents
%========================================
% Intro
%========================================
\section{Introduction}
Modular fusion categories give rise to Reshetikhin-Turaev topological  field theories, which produces a projective representation of the mapping class group $\MCG{g}$ of a closed oriented surface $\Sigma_{g}$ of genus $g$ \cite{Tur10}. When $g=1$, $\MCG{1} \cong \SL(2, \bZ)$, and the algebraic properties of the TFT representations of $\SL(2,\bZ)$, such as the congruence property and the Galois symmetry, are extensively studied in the literature \cite{dBG, CG94, SZ12, NS10, DLN15}. These algebraic properties not only provide important insights into the study of 3-manifolds \cite{Fun13}, but also provide indispensable tools in the study of modular fusion categories, especially in  classification problems \cite{Rank5, NWZ-trans}. 

One would naturally extend to investigate the properties of mapping class group representations of higher genus surfaces and their applications. The analogue of $\SL(2,\bZ)$ in higher genus is the symplectic group $\Sp(2g, \bZ)$, and the natural surjection $\MCG{g}\twoheadrightarrow\Sp(2g, \bZ)$ has a nontrivial kernel $\torelli{g}$, called the Torelli group \cite{FM12}. Apparently, the associated TFT representation of $\MCG{g}$ factors through $\Sp(2g, \bZ)$ if and only if it vanishes on $\torelli{g}$. 

In this paper, we study modular fusion categories whose mapping class group representations vanish on $\torelli{g}$, or $g$-Torelli trivial modular fusion categories. Our main results, presented in Section \ref{s3}, are two classification results obtained from the triviality of the Torelli group actions. Firstly, we show in Theorem \ref{thm:triv-torelli-all-genus} that a modular fusion category is $g$-trivial for \textbf{any} $g \ge 3$ if and only if it is pointed. Our result is a refinement of the result in \cite[Prop.~6.7]{MW25}, where the author gave necessary and sufficient conditions for modular tensor categories to have trivial Torelli group actions for \textbf{all} genus $g$. More interestingly, motivated by the study of 2-Torelli trivial modular fusion categories, we study modular fusion categories whose adjoint subcategories are isotropic (having trivial twist). We obtain a full classification of such categories in Theorem \ref{t:iso_adj}, showing that a modular fusion category has isotropic adjoint subcategory if and only if it is a modular subcategory of the Drinfeld center of a pointed fusion category whose group of simple objects is abelian, extending the results in \cite{FF20}. We also obtained the congruence property of the $\Sp(2g, \bZ)$ representation of a $g$-Torelli trivial modular fusion categories in Theorem \ref{thm:congruence-kernel}.

\subsection*{Acknowledgment}
S.N. was partially supported by the Simons Foundation MPS-TSM-00008039. Y.W.~was supported by NSFC grant No.\ 12301045 and No.\ 12571041, and he would like to thank Shuang Ming and Yuze Ruan for helpful discussions.

%========================================
\section{Preliminary}
In this section, we set up notations and recall basic concepts on modular fusion categories and their projective mapping class groups representations.
%========================================
\subsection{Modular fusion categories}
\label{subsec:mtc} 
A \emph{fusion category} over $\bC$ is a semisimple finite tensor category over $\bC$ \cite{ENO05}. We denote a complete set of representatives of isomorphism classes of simple objects of a fusion category $\CC$ by $\irr(\CC)$. In particular, the tensor unit $\1$ is simple. The tensor product of $\cC$ endows $K_0(\CC)$, the Grothendieck group of $\CC$, with a ring structure that is characterized by the \emph{fusion rules}: For $a, b\in\irr(\CC)$,
\begin{equation}\label{eq:fusion-rules}
a\otimes b = \sum_{c\in\irr(\CC)} N_{a, b}^{c} c\,, \text{ with } N_{a, b}^{c} := \dim_{\BC}\CC(a\otimes b, c)\,.
\end{equation}
For any $a \in \irr(\cC)$, the largest real eigenvalue of the fusion matrix $(N_a)_{b, c} := N_{a,b}^{c}$ of $a$ is called the Frobenius-Perron dimension of $a$, denoted by $\FP(a)$. The Frobenius-Perron dimension of $\cC$ is defined to be 
\begin{equation}
    \FP(\cC) := \sum_{a \in \irr(\cC)} \FP(a)^2\,.
\end{equation}

A fusion category $\CC$ is rigid,  i.e., every object $x \in \cC$ admits a left dual and a right dual. A left dual of $x$ is a triple $(x^*, \coev_x, \ev_x)$ with $x^*\in \CC$ such that the coevaluation $\coev_{x}: \1 \to x \otimes x^{*}$ and the evaluation $\ev_{x}: x^* \otimes x \to \1$ are morphisms satisfying the left rigidity conditions; and a right dual of $x$ is defined similarly (see \cite[Sec.~2.10]{EGNO}). If we fix a choice of a left dual for each object in $\cC$, then the left duality extends to a tenor functor $(-)^* : \CC \to \CC^{\op}$ and the double dual functor $(-)^{**}: \cC \to \cC$ is a tensor equivalence. An isomorphism $\piv: \id_\CC \xrightarrow{\sim} (-)^{**}$ of tensor functors is called a \emph{pivotal structure} on $\cC$, and the pair $(\cC, \piv)$ is called a pivotal fusion category. 

In a pivotal fusion category $(\cC, \piv)$, the (left) pivotal trace of an endomorphism $f: x \to x$ in $\cC$ is defined to be $\tr^{\piv}(f) := \ev_{x^{*}} \circ ((\piv_x\circ f) \otimes \id_{x^{*}}) \circ \coev_x \in \BC$, and the (left) pivotal dimension of $x$ is $d^{\piv}_x := \tr^{\piv}(\id_x)$. We call $\piv$ (or $\cC$) \emph{spherical} if $\tr^{\piv}(f) = \tr^{\piv}(f^*)$ for all endomorphism $f$ of $\CC$. For brevity,  we will omit $\piv$ when there is  no ambiguity. 
In \cite{Mug031}, M\"uger introduced the notion of the global dimension $\dim(\cC)$ of a fusion category $\cC$.
When $\cC$ is spherical, we have
\begin{equation}
\dim(\CC) = \sum_{a \in \irr(\CC)} d_a^2\,.
\end{equation}
By \cite[Rmk.\ 2.5]{ENO05}, for any fusion category $\cC$, $\dim(\CC)$ is a totally positive cyclotomic integer. We denote the positive square root of $\dim(\CC)$ by $\sqrt{\dim(\CC)}$. 

Let $\CC$ be a braided fusion category with braiding $\beta_{x,y}: x \otimes y \xrightarrow{\sim} y \otimes x$ for $x, y \in \cC$, and let $\DD \subset \CC$ be a collection of objects in $\CC$. The \emph{M\"uger centralizer} of $\DD$ in $\CC$, denoted by $\zmug(\cD, \cC)$, is the full subcategory of $\CC$ containing objects $x \in \CC$ such that $\beta_{y, x} \circ \beta_{x, y} = \id_{x \otimes y}$ for all $y \in \DD$ \cite{Mug032}. In particular, the fusion subcategory $\zmug(\cC) := \zmug(\CC, \cC)$ is called the \emph{M\"uger center} of $\CC$. A braided fusion category $(\CC, \beta)$ is \emph{symmetric} if $\zmug(\cC) = \CC$; it is called \emph{non-degenerate} if $\zmug(\cC) \cong \Vec$, the category of finite-dimensional vector spaces over $\BC$. For example, the Drinfeld center $\zdr(\cC)$ of a fusion category $\cC$ \cite{Maj91,JS91} is a non-degenerate braided fusion category \cite[Cor.\ 3.9]{DGNO-BFC}.

A \emph{ribbon structure} on a braided fusion category $\cC$ with braiding $\be$ is a natural isomorphism $\theta: \id_{\cC} \xrightarrow{\sim} \id_{\cC}$ satisfying the twist equation 
\begin{equation}\label{eq:tw-def}
\theta_{x\otimes y} = (\theta_{x} \otimes \theta_y)\circ \beta_{y, x} \circ \beta_{x, y} 
\end{equation}
and $\theta_{x^*} = (\theta_x)^*$ for all $x, y \in \cC$. As is shown in \cite[Prop.\ 8.10.12]{EGNO} and \cite[p.38]{NS07}, a ribbon fusion category $\cC$ is a spherical braided fusion category, and vice versa. For $a \in \Irr(\cC)$, $\theta_a$ is a non-zero scalar multiple of $\id_a$. We will use the same notation to denote both $\theta_a$ and associated scalar for any simple object $a$. The $T$-matrix of $\CC$ is the diagonal matrix
\[
T_{a, b} := \delta_{a, b} \theta_{a},\ a, b \in \irr(\CC)\,.
\]
The (unnormalized) $S$-matrix of $\CC$ is defined to be
\[
S_{a, b} := \tr(\beta_{b, a^*}\circ \beta_{a^*, b}),
\  a, b \in \irr(\CC)\,.
\]
In particular, $S_{a, \1} = S_{\1, a} = d_a$ for $a \in \irr(\cC)$. 

A \emph{modular fusion category} (MFC) is a ribbon fusion category $\cC$ whose underlying braided fusion category is non-degenerate, or equivalently, its S-matrix is invertible \cite{Mug032}. The pair ($S$, 
$T$) is called the (unnormalized) \emph{modular data} of $\CC$. It is well-known that for a modular fusion category $\cC$, the assignment $\smat \mapsto S$, $\tmat \mapsto T$ defines a projective representation of $\SL_2(\bZ)$ \cite{Tur10}. 

We finish our review on modular fusion categories by setting up the conventions for the graphical calculus, which enables us to explicitly describe the projective mapping class group representations in the next section.

Let $\cC$ be a modular fusion category. By \cite[Thm.\ 2.2]{NS07-piv}, we can assume without loss of generality that $\cC$ is strictly pivotal, namely, the associativity constraints, the monoidal structure of the dual functor, the double dual functor and the pivotal structure of $\cC$ are all identities. All the graphs below are $\cC$-colored ribbon graphs in the sense of \cite[Ch.\ I.2]{Tur10}, which we read from top to bottom. Moreover, the tensor unit $\1 \in \cC$ is omitted in the graphs. For example, for the left dual $(x^*, \coev_x, \ev_x)$ of an object $x \in \cC$, the morphisms $\coev_x$ and $\ev_x$ are depicted as
\[\DualDef.\]
The right dual of $x$ can be defined via the strict the pivotal structure and the left dual of $x^*$, and we use cups and caps with arrows pointing to the right to denote the evaluation $\ev'_x := \ev_{x^*}$ and the coevaluation $\coev'_x:=\coev_{x^*}$ for the right dual of $x$. 

For any $x, y \in \cC$, we depict the braiding $\be_{x, y}$ and its inverse as
\[\be_{x, y} = \Brd\,, \quad \be_{x, y}^{-1} = \BrdInv\,.\]
We draw
\[\theta_x = \vcenter{\hbox{\scalebox{0.6}{
\begin{tikzpicture}[line width=2pt]
\node[draw, circle] (a) at (0,0) {$+1$}; 
\draw (a.north)--(0,1.5);
\draw (a.south)--(0,-1.5);
\node[right] at (0, 1.4) {\Large $x$};
\end{tikzpicture}}}}\,,
\quad\text{and}\quad
\theta^{-1}_x = \vcenter{\hbox{\scalebox{0.6}{
\begin{tikzpicture}[line width=2pt]
\node[draw, circle] (a) at (0,0) {$-1$}; 
\draw (a.north)--(0,1.5);
\draw (a.south)--(0,-1.5);
\node[right] at (0, 1.4) {\Large $x$};
\end{tikzpicture}}}}\]
for the ribbon structure.

For any triple of simple objects $(a, b, c) \in \irr(\CC)^3$, we fix a choice of basis 
\begin{equation} 
\{\hmu_j(a|b, c)\in \CC(a, b\ot c) \mid j = 1, ..., N_{b, c}^{a}\}
\end{equation} 
for the Hom-space $\CC(a, b \ot c)$, and
 use the following trivalent vertex to represent $\hmu_j(a|b, c)$:
\begin{equation}\label{eq:mu-hat} 
\hmu_j(a|b, c)=\TrivalentVertexHat\,.
\end{equation}

\begin{remark}\label{r:zero-tri-vert}
For completeness, for all triples $(a, b, c) \in \irr(\cC)^3$ and any $j \in \bZ$, we set $\hmu_j(a|b, c) = 0$ if $N_{b, c}^{a} = 0$, or if $N_{b, c}^{a} > 0$ and $j \notin [1, N_{b, c}^{a}]$.    
\end{remark}

When the Hom-space is 1-dimensional, we omit the index $j$. For example, for any $a \in \Irr(\cC)$, $\coev_a$ is a non-zero multiple of $\hmu_j(\1|a, a^*)$ in $\cC(\1, a \ot a^*)$, i.e., there exists $u_a \in \bC^\times$ such that 
\[\coev_a = \vcenter{\hbox{\scalebox{0.6}{
\begin{tikzpicture}[line width=2pt]
\draw[->-=.5] (0,0) arc(0:180:1);
\node[below] at (0,0) {\Large $a^*$};
\node[below] at (-2,-0.13) {\Large $a$};
\end{tikzpicture}}}} 
= u_a \cdot \hmu(\1|a, a^*)
= u_a \cdot \vcenter{\hbox{\scalebox{0.6}{
\begin{tikzpicture}[line width=2pt]
\draw (0,0) -- ++(225:1.5)
      (0,0) -- ++(315:1.5);
\draw ([shift=(225:0.3cm)]0,0) arc (225:315:0.3cm);
\node[below] at (-1.2, -1.13) {\Large $a$};
\node[below] at (1.2, -1) {\Large $a^*$};
\fill[black] (0,0) circle (3pt);
\end{tikzpicture}}}}\]

Since $\CC$ is semisimple, there exists a basis $\{\cmu_k(b, c|a) \in \cC(b \ot c, a)\mid k = 1, ..., N_{b, c}^a\}$ for $\CC(a, b \otimes c)$ such that 
\begin{equation}\label{eq:basis1}
\cmu_k(b, c|a) \circ \hmu_j(a|b, c) = \delta_{j,k} \id_{c} 
\quad\text{and}\quad 
\sum_{c \in \irr(\CC)} \sum_{j = 1}^{N_{a, b}^{c}} \hmu_j(a|b, c) \circ \cmu_j(b, c|a) = \id_{a \o b}\,,
\end{equation}
and we denote
\[\cmu_j(a, b|c)=\TrivalentVertexCheck\,.\]
For brevity, we adopt the following convention: If an index in a summation is used to label objects in $\CC$ in a graph, we take the sum over $\Irr(\CC)$; if an index is used to label the basis vectors in a Hom-space, we sum the index from 1 to the dimension of the corresponding Hom-space. For example, \eqref{eq:basis1} in our graphical calculus reads
\begin{equation}\label{eq:basis2}
\StraightLine{b} \quad \StraightLine{c}
=\sum_{a, j} \CupCap
\quad\text{and}\quad
\CapCup=\delta_{j, k} \StraightLine{a}\,.
\end{equation}
%=====================================
\subsection{Mapping class group representations from modular fusion categories}\label{sec:MCG-rep}
%========================================
Let $g \ge 0$ be an integer, and $\Sigma_g$ be a closed (compact with empty boundary) oriented smooth surface of genus $g$. The \emph{mapping class group} of $\Sigma_g$, denoted by $\MCG{g}$, is the group of isotopy classes of orientation-preserving self-homeomorphisms of $\Sigma_g$, i.e., $\MCG{g} = \operatorname{Homeo}^+(\Sigma_g)/\isotopy$. Elements in $\MCG{g}$ are called mapping classes. Note that when $g=0$, $\MCG{0}$ is trivial \cite[Sec.\ 2.2]{FM12}, so from now on, we assume $g \ge 1$.

It is well-known that $\MCG{g}$ is generated by a finite set of \emph{Dehn twists} \cite{Deh38} (see \cite[Chap.\ 3-4]{FM12} for details). Let $\gamma \subset \Sigma_g$ be a simple closed curve, i.e., the image of an embedding $\bS^1\hookrightarrow \Sigma_g$. The Dehn twist along $\gamma$, denoted by $\dt_\gamma$, is locally a twist map $(\om, t) \mapsto (\om+2\pi t, t)$ on a regular neighborhood $N \cong \bR/2\pi \bZ \times [0, 1]$ of $\gamma$ (see \eqref{eq:Dehn-Twist}) in $\Sigma_g$, and it is the identity outside of $N$. 
\begin{equation}\label{eq:Dehn-Twist}
\vcenter{\hbox{\scalebox{0.6}{
\begin{tikzpicture}[line width=2pt]
\begin{scope}[xshift=-12cm, scale=0.5]
\draw (0,0) ellipse (1 and 3);
\draw (8,3) arc (90 : -90 : 1 and 3);
\draw [dashed] (8,3) arc (90 : 270 : 1 and 3);
\draw (0,3) to (8,3);
\draw (0,-3) to (8,-3);
\draw [purple] (4,3) arc (90 : -90 : 1 and 3);
\draw [dashed, purple] (4,3) arc (90 : 270 : 1 and 3);
\draw [blue] (1,0) to (9,0);
\node (g) at (5.4,1.6) {\Large $\gamma$};
\end{scope}
\begin{scope}[xshift=-5cm]
\node[above] (k) at (0,0) {\Large $\dt_\gamma$};
\draw [->] (-1.2,0) to (1.2,0);
\end{scope}
\begin{scope}[xshift=-2cm, scale=0.5]
\draw (0,0) ellipse (1 and 3);
\draw (8,3) arc (90 : -90 : 1 and 3);
\draw [dashed] (8,3) arc (90 : 270 : 1 and 3);
\draw (0,3) to (8,3);
\draw (0,-3) to (8,-3);
\draw [blue] (1,0) to [out=0, in=-60] (4,3);
\draw [dashed, blue] (4,3) arc (90 : 270 : 1 and 3);
\draw [blue] (9,0) to [out=180, in=60] (4,-3);
\end{scope}
\end{tikzpicture}}}}
\end{equation}
In \cite{Lic64}, Lickorish proved that the Dehn twists along the curves $m$'s, $\ell$'s and $e$'s shown in Figure \ref{fig:lic-gen} form a generating set of $\MCG{g}$. 
\begin{figure}[ht]
\centering
$\vcenter{\hbox{\scalebox{0.6}{
\begin{tikzpicture}[line width=2pt]
\draw (0,2) arc (90:270:2);
\draw (10, -2) arc (-90:90:2);
\draw (0,2)--(10,2); \draw (0,-2)--(10,-2);
\HoleWithCurve{(0,0)} \HoleWithCurve{(4,0)}
\LeftEquator{(0,0)} \LeftEquator{(4,0)}
\RightEquator{(4,0)} \RightEquator{(10,0)}
\node at (7,0) {\Large $\cdots$};
\HoleWithCurve{(10,0)}
\node at (0.7,1.3) {\Large $m_1$};
\node at (0,-1) {\Large $\ell_1$};
\node at (2,0.85) {\Large $e_1$};
\node at (8.2,0.85) {\Large $e_{g-1}$};
\begin{scope}[shift={(4,0)}]
\node at (0.7,1.3) {\Large $m_2$};
\node at (0,-1) {\Large $\ell_2$};
\node at (2,0.85) {\Large $e_2$};
\end{scope}
\begin{scope}[shift={(10,0)}]
\node at (0.7,1.3) {\Large $m_g$};
\node at (0,-1) {\Large $\ell_g$};
\end{scope}
\end{tikzpicture}}}}$
\caption{Lickorish's generating set of $\MCG{g}$.}
\label{fig:lic-gen}
\end{figure}
To give a simple description of the mapping class group representation associated to a modular fusion category, we choose the following alternative set of generators of $\MCG{g}$:
\begin{equation}\label{eq:mcg-gen}
\{\dt_{m_{1}}, ..., \dt_{m_{g}}, \dt_{e_{1}}, ..., \dt_{e_{g-1}}, \cmcs_{1}, ..., \cmcs_{g}\}\,,
\end{equation}
where $\cmcs_{k} := \dt_{m_{k}} \circ \dt_{\ell_{k}}\circ \dt_{m_{k}}$ for $1 \le k \le g$.

Let $\cC$ be a modular fusion category, and consider the Hom-space
\begin{equation}\label{eq:RT-space}
\RTspace{\cC}{g} := \CC(\1, I^{\ot g}) \quad\text{where}\quad
I = \bigoplus_{a \in \irr(\CC)} a \ot a^*\,.
\end{equation}
By the semisimplicity of $\cC$, we have a decomposition
\begin{equation}\label{eq:W-decomp}
\RTspace{\cC}{g} \cong 
\bigoplus_{\substack{a_1, ..., a_g\\b_1, ..., b_{2g-2}}}
\CC(b_1, a_1 \ot a_1^*) \ot_{\bC} \cC(b_2, b_1 \ot a_2) \ot_{\bC} \cdots \ot_{\bC} \CC(b_{2g-2}, b_{2g-3} \ot a_g)\,.
\end{equation}
In view of this decomposition, we choose a basis for $\RTspace{\cC}{g}$ which consists of ``left branching trees'' of the trivalent vertices $\hmu$ in \eqref{eq:mu-hat} as follows. 

For any tuples $\bm{a} = (a_1, ..., a_g) \in \irr(\CC)^g$, $\bm{b} = (b_1, ..., b_{2g-2}) \in \irr(\cC)^{2g-2}$ and $\bm{j} = (j_1, ..., j_{2g-2}) \in \bZ^{2g-2}$, define
\begin{equation}\label{eq:W-basis}
\begin{split}
& \Wbase{g}{a}{b}{j} := 
\vcenter{\hbox{\scalebox{0.6}{
\begin{tikzpicture}[line width=2pt]
\foreach \x in {0,1,2,5}{\BranchDown{(\x, \x)}}
\draw (2,2)--(2.5,2.5);
\begin{scope}[shift={(4,4)}]
\draw (0,0)--(-0.5,-0.5); \draw (0,0)--(1,-1);
\draw ([shift=(225:0.3cm)]0,0) arc (225:315:0.3cm);
\fill[black] (0,0) circle (3pt);
\end{scope}
\node[below] at (-1,-1) {\Large $a_1^{\phantom{*}}$}; \node[below] at (1,-1) {\Large $a_1^*$};
\draw (2,0)--(3,-1); \draw (3,1)--(5,-1);\node[below] at (3,-1) {\Large $a_2^{\phantom{*}}$}; \node[below] at (5,-1) {\Large $a_2^*$};
\draw (5,3)--(9,-1); \draw (6,4)--(11,-1);\node[below] at (9,-1) {\Large $a_g^{\phantom{*}}$}; \node[below] at (11,-1) {\Large $a_g^*$};
\node[below] at (0,-0.3) {\Large $j_1$}; \node at (0.3,0.8) {\Large $b_1$};
\node[below] at (1,0.7) {\Large $j_2$};\node at (1.3,1.8) {\Large $b_2$};
\node[below] at (2,1.7) {\Large $j_3$}; \node at (4,4.8) {\Large $b_{2g-2}$};
\node[below] at (4.1,3.5) {\Large $j_{2g-2}$};
\foreach \x in {1,2,3} {\fill[black] (2.4+0.3*\x,2.4+0.3*\x) circle (1pt);} 
\end{tikzpicture}}}}  
\end{split}
\end{equation}
Algebraically, we have (cf.\ Remark \ref{r:zero-tri-vert})
\begin{equation} 
\label{eq:W-basis-alg}
\begin{split} 
\Wbase{g}{a}{b}{j}
=& \left(\hmu_{j_{1}}(b_1|a_1, a_1^*) \ot \id^{\ot (2g-2)} \right)
\circ\left(\hmu_{j_{2}}(b_2|b_1, a_2) \ot \id^{\ot (2g-3)}\right)\\
&\hspace*{5em}\circ\cdots\circ\left(\hmu_{j_{2g-2}}(b_{2g-2}| b_{2g-3}, a_g) \ot \id \right) \circ \hmu(\1|b_{2g-2}, a_{g}^*)\,.  
\end{split}
\end{equation}
Note that $\Wbase{g}{a}{b}{j} \ne 0$ if and only if every trivalent vertex in \eqref{eq:W-basis} is non-zero. 

\begin{definition}\label{def:W-base}
Let $\cC$ be a modular fusion category. For any integer $g \ge 1$, an element $(\bm{a}, \bm{b}, \bm{j})\in 
\irr(\cC)^{g}  \times \irr(\cC)^{2g-2} \times \bZ^{2g-2}$
is called \emph{admissible} if  $\Wbase{g}{a}{b}{j} \ne 0$. We set 
\[W_{\cC, g} := \{\Wbase{g}{a}{b}{j} \mid (\bm{a}, \bm{b}, \bm{j}) \text{ is admissible}\}\,.\qedhere\] 
\end{definition}
We immediately have the following lemma.
\begin{lemma}\label{lem:W-basis}
For any $g \ge 1$, $W_{\cC, g}$ is a basis for $\RTspace{\cC}{g}$. \qed
\end{lemma}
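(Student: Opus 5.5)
The plan is to transport the standard fact that compositions of chosen bases of multiplicity spaces give a basis of an iterated Hom-space, using the decomposition \eqref{eq:W-decomp}. I would argue by induction along the left branching tree, showing at each stage that the partial trees
\[
\left(\hmu_{j_{1}}(b_1|a_1, a_1^*) \ot \id^{\ot (k-1)}\right)\circ\cdots\circ \hmu_{j_{k}}(b_{k}| b_{k-1}, x_k),
\]
with $x_k$ the appropriate next factor $a_i$ or $a_i^*$, form a basis of $\CC(b_k, a_1\ot a_1^*\ot\cdots\ot x_k)$ as the indices range over admissible values. The final step uses the vertex $\hmu(\1|b_{2g-2},a_g^*)$, which is nonzero exactly when $b_{2g-2}\cong a_g$, since $\CC(\1,b\ot a_g^*)$ is one-dimensional in that case and zero otherwise.

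The key tool is the completeness and orthogonality relations \eqref{eq:basis1}. For simple $c$ and objects $X,y$, the relations imply that the map
\[
\bigoplus_{b\in\irr(\CC)}\CC(b,X)\ot_{\bC}\CC(c,b\ot y)\to \CC(c,X\ot y),\qquad f\ot h\mapsto (f\ot\id_y)\circ h,
\]
is an isomorphism. Surjectivity comes from inserting the decomposition of $\id_{X\ot y}$. One decomposes $X$ into simples and applies the second identity in \eqref{eq:basis1}, or more simply uses semisimplicity, writing $X\cong\bigoplus_b b^{\oplus m_b}$ and using that the spaces $\CC(c,b\ot y)$ have bases $\hmu_j$. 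Injectivity follows by composing with the dual vertices $\cmu_k$ and using $\cmu_k\circ\hmu_j=\delta_{j,k}\id$.

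Given this isomorphism, tensor products of bases map to a basis. Applying it $2g-1$ times, starting from $X=a_1\ot a_1^*$, shows that the images of the pure tensors of chosen basis vectors, which are exactly the $\Wbase{g}{a}{b}{j}$ with all vertices nonzero, form a basis of $\CC(\1,\bigoplus_{\bm a}a_1\ot a_1^*\ot\cdots\ot a_g\ot a_g^*)=\RTspace{\cC}{g}$. Remark~\ref{r:zero-tri-vert} ensures that non-admissible tuples give zero vectors, so they are correctly excluded.

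Nothing here is a genuine obstacle. The only point needing care is bookkeeping: the basis $\hmu_j(a|b,c)$ is indexed with simple source, so each step must peel off one tensor factor on the right while keeping the source simple. That is why the tree is left-branching with intermediate simple labels $b_i$, and why the direct sum over $\bm a$ in $I^{\ot g}$ contributes distinct summands, so vectors with different $\bm a$ are automatically independent.
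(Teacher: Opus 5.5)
Your proposal is correct and follows the argument the paper has in mind. The paper gives no written proof and treats the lemma as immediate from the semisimplicity decomposition \eqref{eq:W-decomp}; your iterated isomorphism $\bigoplus_{b}\CC(b,X)\otimes_{\bC}\CC(c,b\otimes y)\cong\CC(c,X\otimes y)$ is exactly what justifies that decomposition and shows the left-branching trees realize it, including the final one-dimensional factor $\CC(\1,b_{2g-2}\otimes a_g^*)$ that \eqref{eq:W-decomp} suppresses. One small miscount: starting from $X=a_1\otimes a_1^*$ you apply the isomorphism $2g-2$ times, not $2g-1$, which does not affect the argument.
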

To save space, we introduce the following ribbon diagrams as alternative graphical presentations of $\Wbase{g}{a}{b}{j}$ that will be used later:
\begin{equation}\label{eq:RTbasis}
\vcenter{\hbox{\scalebox{0.6}{
\begin{tikzpicture}[line width=2pt]
\draw (-3.5,-0.8) rectangle (3.5,0.8); 
\node at (0,0) {\LARGE $\Wbase{g}{a}{b}{j}$};
%========================================
\foreach\x in {1,2}{
\draw(2*\x-5,-0.8)--(2*\x-5,-2.5); \node[below]at(2*\x-5,-2.5){\Large $a_{\x}^{\phantom{*}}$};
\draw(2*\x-4,-0.8)--(2*\x-4,-2.5);\node[below]at(2*\x-4,-2.5){\Large $a_{\x}^*$};}
%========================================
\draw (2,-0.8)--(2,-2.5);\node[below] at (2,-2.5) {\Large $a_g^{\phantom{*}}$};
\draw (3,-0.8)--(3,-2.5);\node[below] at (3,-2.5) {\Large $a_g^*$};
\foreach\x in {1,2,3} {\fill[black] (0.6+0.2*\x,-1.8) circle (1pt);}
\end{tikzpicture}}}}
:=
\vcenter{\hbox{\scalebox{0.6}{
\begin{tikzpicture}[line width=2pt]
\draw (0,2)--(5.2,2); \draw(6.8,2)--(9,2);
\node[below] at (0,0) {\Large $a_1^{\phantom{*}}$};
\node[below] at (1.5,0) {\Large $a_1^{*}$};
\node[below] at (3,0) {\Large $a_2^{\phantom{*}}$};
\node[below] at (4.5,0) {\Large $a_2^{{*}}$};
\node[below] at (7.5,0) {\Large $a_g^{\phantom{*}}$};
\node[below] at (9,0) {\Large $a_g^{{*}}$};
%========================================
\foreach \x in {0,1,2,3,5,6}{\draw (1.5*\x, 2)--(1.5*\x,0);}
%========================================
\foreach\x in {1,2,3,5,6}
{\begin{scope}[shift={(1.5*\x, 2)}]
\draw ([shift=(180:0.3cm)]0,0) arc (180:270:0.3cm);
\fill[black] (0,0) circle (3pt);
\end{scope}}
%========================================
\foreach\x in {1,2,3}{\fill[black] (5.6+0.2*\x,2) circle (1pt);}
%========================================
\foreach\x in {1,2,3}{\node[left] at (1.5*\x-0.05,1.6) {\Large $j_{\x}$};}
%========================================
\node[left]at(7.5-0.05,1.6){\Large $j_{2g-2}$};
\node[above] at (2.25,2) {\Large $b_1$};
\node[above] at (2.25+1.5,2) {\Large $b_2$};
\node[above] at (8.25,2) {\Large $b_{2g-2}$};
\end{tikzpicture}}}}\,.
\end{equation}

Now we are ready to describe the projective representation of $\MCG{g}$ on $\RTspace{\cC}{g}$. 
This projective representation is afforded by the Reshetikhin-Turaev topological quantum field theory (RT-TQFT) associated to $\cC$ \cite[Ch.\ IV]{Tur10}, denoted by $\RT_{\cC}$, which is a symmetric monoidal \emph{projective} functor from the (2+1)D cobordism category to $\Vec$. In particular, $\RT_{\cC}$ assigns $\Sigma_g$ to the vector space $\RTspace{\cC}{g}$.

For any (representative of) $f \in \mcg{g}$, its mapping cylinder can be viewed as a cobordism from $\Sigma_g$ to itself, whose cobordism class, denoted by $\cyl(f)$, depends only on the mapping class $f$. 
Hence, $\RT_{\cC}$ assigns to $f$ a linear operator
\[\RTmat{\cC}{g}(f) := \RT_{\cC}(\cyl(f)): \RTspace{\cC}{g} \to \RTspace{\cC}{g}\,.\]
With respect to our fixed choice of basis $W_{\cC,g}$ for $\RTspace{\cC}{g}$, the matrix coefficients of $\RTmat{\cC}{g}(f)$ can be computed explicitly according to the recipe detailed in \cite[Ch.\ IV]{Tur10}. Fix an ordering on $W_{\cC, g}$, we can identify $\Wbase{g}{a}{b}{j} \in W_{\cC, g}$ with a column vector of length
\begin{equation}
\label{eq:dim-Vg}
\ncg{\cC}{g} := \dim_{\bC}(\RTspace{\cC}{g}) = |W_{\cC,g}|\,.
\end{equation} 
\begin{definition} For $M \in \GL(n, \bC)$, we denote by $[M]$ its image of $M$ under the natural surjection  $\GL(n, \bC) \to \PGL(n, \bC)$. 
\end{definition}
In this way, the linear operator $\RTmat{\cC}{g}(f)$ is represented by a square matrix of size $\ncg{\cC}{g}$, whose column corresponding to $\Wbase{g}{a}{b}{j}$ is 
\begin{equation}
\RTmat{\cC}{g}(f)(\Wbase{g}{a}{b}{j})\,,
\end{equation} 
and we simply write $\RTmat{\cC}{g}(f) \in \GL(\ncg{\cC}{g}, \bC)$.  Then the assignment 
\begin{equation}\label{eq:RT-rep}
\RTrep{\cC}{g}: \MCG{g} \to \PGL(\ncg{\cC}{g}, \bC)\,,\quad f \mapsto [\RTmat{\cC}{g}(f)]	
\end{equation}
defines a group homomorphism, i.e., $\RTrep{\cC}{g}$ is a projective representation of $\MCG{g}$ \cite[Ch.\ IV]{Tur10}.

We end this section by giving an explicit description of $\RTrep{\cC}{g}(\dt_\gamma)$ when $\dt_\gamma$ is a Dehn twist of  a simple closed curve $\gamma$ in $\Sigma_g$. Let $\hmu_g(\bm{a}, \bm{b}, \bm{j}) \in W_{\cC,g}$. Define
\begin{equation}\label{eq:rib-graph}
\hmu'_{g}(\bm{a}, \bm{b}, \bm{j}) := 
\vcenter{\hbox{\scalebox{0.6}{
\begin{tikzpicture}[line width=2pt]
\begin{scope}[shift={(-1,-1)}]
\draw (0,2)--(7,2); \draw(9,2)--(12,2);
\node[left] at (0.1,0) {\Large $a_1^{\phantom{*}}$};
\node[right] at (1.9,0) {\Large $a_1^{*}$};
\node[left] at (4.1,0) {\Large $a_2^{\phantom{*}}$};
\node[right] at (5.9,0) {\Large $a_2^{{*}}$};
\node[left] at (10.1,0) {\Large $a_g^{\phantom{*}}$};
\node[right] at (11.9,0) {\Large $a_g^{{*}}$};
%========================================
\foreach \x in {0,1,2,3,5,6}{\draw (2*\x, 2)--(2*\x,0.5);}
%========================================
\foreach\x in {1,2,3,5,6}
{\begin{scope}[shift={(2*\x, 2)}]
\draw ([shift=(180:0.3cm)]0,0) arc (180:270:0.3cm);
\fill[black] (0,0) circle (3pt);
\end{scope}}
%========================================
\foreach\x in {1,2,3}{\node[left] at (2*\x-0.05,1.6) {\Large $j_{\x}$};}
%========================================
\node[left]at(10,1.6){\Large $j_{2g-2}$};
\node[above] at (3,2) {\Large $b_1$};
\node[above] at (5,2) {\Large $b_2$};
\node[above] at (11,2) {\Large $b_{2g-2}$};	
\end{scope}
%========================================
\foreach\x in {0,4,10}
{\begin{scope}[shift={(1+\x,-2.28)}, xscale=2, yscale=1.2]
\draw[red] (0,0) arc (0:180:0.5 and 1);  
\draw (0,1.5) arc (360:180:0.5 and 1);
\fill[white] (-0.18,0.73) circle (3pt);  
\fill[white] (-0.82,0.73) circle (3pt);  
\draw[red] (0,0) arc (0:90:0.5 and 1);  
\draw (-0.5,0.5) arc (270:180:0.5 and 1);
\draw(-0.45,0.5)--++(-0.13,0.18);\draw(-0.45,0.5)--++(-0.18,-0.13);
\end{scope}
\begin{scope}[shift={(1+\x,-2.28)}, yscale=-1.2, xscale=2
]
\draw[red] (0,0) arc (0:180:0.5 and 1);  
\draw[red] (0,1.5) arc (360:180:0.5 and 1);
\fill[white] (-0.18,0.73) circle (3pt);  
\fill[white] (-0.82,0.73) circle (3pt);  
\draw[red] (0,0) arc (0:90:0.5 and 1);  
\draw[red] (-0.5,0.5) arc (270:180:0.5 and 1);
\end{scope}}
\foreach\x in {1,2,3}{\fill[black] (6.5+0.2*\x,1) circle (1pt);}
\end{tikzpicture}}}}
\end{equation}
where the red color stands for the Kirby color 
\begin{equation}\label{eq:kirby}
\Omega := \sum_{y \in \irr(\cC)} d_y y \in K_0(\cC) \ot_{\bZ}\bC\,.
\end{equation}
By the cutting property of the Kirby color (see for example \cite[Cor.~3.1.11]{BK01}), there exists a nonzero constant $\alpha(\bm{a}) \in \bC^{\times}$ depending on $\bm{a}$  such that 
\begin{equation}
\hmu'_{g}(\bm{a}, \bm{b}, \bm{j}) = \alpha(\bm{a}) \cdot \hmu_{g}(\bm{a}, \bm{b}, \bm{j})\,.
\end{equation}
In particular, the codomains of $\hmu'_{g}(\bm{a}, \bm{b}, \bm{j})$ and $\hmu_{g}(\bm{a}, \bm{b}, \bm{j})$ agree. A tubular neighborhood of the top component of $\hmu'_{g}(\bm{a}, \bm{b}, \bm{j})$ is a handlebody of genus $g$ in $\bR^3$ with boundary surface $\Sigma_g$:
\begin{equation}
\vcenter{\hbox{\scalebox{0.6}{
\begin{tikzpicture}[line width=2pt]
\draw[gray] (-1,2) arc (90:270:2);
\draw[gray] (11, -2) arc (-90:90:2);
\draw[gray] (-1,2)--(11,2); \draw[gray] (-1,-2)--(11,-2);
\begin{scope}[shift={(0,-0.5)}]
\grayhole{0,0}; \grayhole{4,0};\grayhole{10,0};
\end{scope}
%========================================
% Embedding of basis
%========================================
\begin{scope}[shift={(-1,-1)}]
\draw (0,2)--(7,2); \draw(9,2)--(12,2);
\node[left] at (0.1,0.5) {\Large $a_1^{\phantom{*}}$};
\node[right] at (1.9,0.5) {\Large $a_1^{*}$};
\node[left] at (4.1,0.5) {\Large $a_2^{\phantom{*}}$};
\node[right] at (5.9,0.5) {\Large $a_2^{{*}}$};
\node[left] at (10.1,0.5) {\Large $a_g^{\phantom{*}}$};
\node[right] at (11.9,0.5) {\Large $a_g^{{*}}$};
%========================================
\foreach \x in {0,1,2,3,5,6}{\draw (2*\x, 2)--(2*\x,0.5);}
%========================================
\foreach\x in {1,2,3,5,6}
{\begin{scope}[shift={(2*\x, 2)}]
\draw ([shift=(180:0.3cm)]0,0) arc (180:270:0.3cm);
\fill[black] (0,0) circle (3pt);
\end{scope}}
%========================================
\foreach\x in {1,2,3}{\node[left] at (2*\x-0.05,1.6) {\Large $j_{\x}$};}
%========================================
\node[left]at(10,1.6){\Large $j_{2g-2}$};
\node[above] at (3,2) {\Large $b_1$};
\node[above] at (5,2) {\Large $b_2$};
\node[above] at (11,2) {\Large $b_{2g-2}$};	
\end{scope}
%========================================
\foreach\x in {0,4,10}
{\begin{scope}[shift={(1+\x,-2.28)}, xscale=2, yscale=1.2]
\draw[red] (0,0.2) arc (0:100:0.5 and 1.7);  
\draw[red, dashed] (-1,0.2) arc (180:100:0.5 and 1.7);  
\draw (0,1.5) arc (360:180:0.5 and 1);
\fill[white] (-0.1,1) circle (5pt);  
\fill[white] (0,0.24) circle (3pt);  
\fill[white] (-0.93,1) circle (5pt);
\draw[red] (0,0.2) arc (0:90:0.5 and 1.7);  
\draw (-0.5,0.5) arc (270:180:0.5 and 1);
\draw(-0.45,0.5)--++(-0.13,0.18);\draw(-0.45,0.5)--++(-0.18,-0.13);
\end{scope}
\begin{scope}[shift={(1+\x,-2)}, yscale=-1.2, xscale=2
]
\draw[red] (0,0) arc (0:180:0.5 and 1.7);  
\draw[red] (0,2.2) arc (360:180:0.5 and 1.7);
\fill[white] (-0.1,1.1) circle (5pt);  
\fill[white] (-0.92,1.1) circle (5pt);  
\draw[red] (0,0) arc (0:90:0.5 and 1.7);  
\draw[red] (-0.5,0.5) arc (270:180:0.5 and 1.7);
\fill[white] (-1,0) circle (3pt);    
\draw[gray] (-1.1,0)--(-0.8,0);
\end{scope}}
\foreach\x in {1,2,3}{\fill[black] (6.5+0.2*\x,1) circle (1pt);}
\draw[gray] (-1,2) arc (90:270:2);
\end{tikzpicture}}}}    
\end{equation}
Draw $\gamma$ on $\Sigma_g$, take a regular neighborhood $\fN_\gamma\subset \Sigma_g$ of $\gamma$, and view $\fN_\gamma$ as a ribbon graph in $\bR^3$. Endow $\fN_\gamma$ with a $(-1)$-framing and label it by the Kirby color $\Omega$ in \eqref{eq:kirby}. Then we obtain a $\cC$-colored ribbon graph $\hmu'_{g}(\bm{a}, \bm{b}, \bm{j}) \cup \fN_{\gamma}^{(-1)}(\Omega)$, where $\fN_{\gamma}^{(-1)}(\Omega)$ is linked with $\hmu'_{g}(\bm{a}, \bm{b}, \bm{j})$. Since $\fN_{\gamma}^{(-1)}(\Omega)$ has no free ends, using graphical calculus, we can identify the $\cC$-colored ribbon graph $\hmu'_{g}(\bm{a}, \bm{b}, \bm{j}) \cup \fN_{\gamma}^{(-1)}(\Omega)$ with a vector in $\cC(\1, I^{\o g})=\RTspace{\cC}{g}$. According to \cite[Ch.~IV]{Tur10}, there exists $\ld \in \bC^\times$ such that
\begin{equation}\label{eq:2-23}
\RTmat{\cC}{g}(\dt_\gamma)(\Wbase{g}{a}{b}{j}) = \ld\alpha(\bm{a})^{-1}\cdot  \hmu'_{g}(\bm{a}, \bm{b}, \bm{j}) \cup \fN_{\gamma}^{(-1)}(\Omega)
\end{equation}
for all $\Wbase{g}{a}{b}{j} \in W_{\cC,g}$, and both $\alpha$ and $\ld$ are independent of $(\bm{a}, \bm{b}, \bm{j})$.

We finish this subsection with a few examples that will be used in the next sections.

\begin{example}\label{ex:g=2}
Let $g\ge 2$ and let $\varpi$ be a separating curve bounding a subsurface of genus 1:
\[\vcenter{\hbox{\scalebox{0.6}{
\begin{tikzpicture}[line width=2pt]
\draw (0,2) arc (90:270:2);
\draw (9, -2) arc (-90:90:2);
\draw (0,2)--(9,2); \draw (0,-2)--(9,-2);
\hole{0,-0.5};\hole{3,-0.5};\hole{6,-0.5};
\node at (9,0) {\Large $\cdots$};
\begin{scope} 
\draw[blue, ->-=0.5] (1.5, -2) arc (-90:90:0.5 and 2);
\draw[blue, dashed] (1.5, -2) arc (270:90:0.5 and 2);
\node[right] at (2,-1) {\Large $\varpi$};
\end{scope}
\end{tikzpicture}}}}\]
We now compute $\RTmat{\cC}{g}(\dt_{\varpi})$ according to the algorithm described above. First, we put $\varpi$ on the boundary of the tubular neighborhood  of $\Wbase{g}{a}{b}{j}$ and obtain
\[\vcenter{\hbox{\scalebox{0.6}{
\begin{tikzpicture}[line width=2pt]
\draw[gray] (-1,2) arc (90:270:2);
\draw[gray] (11, -2) arc (-90:90:2);
\draw[gray] (-1,2)--(11,2); \draw[gray] (-1,-2)--(11,-2);
\begin{scope}[shift={(0,-0.5)}]
\grayhole{0,0}; \grayhole{4,0};\grayhole{10,0};
\end{scope}
%========================================
% Embedding of basis
%========================================
\begin{scope}[shift={(-1,-1)}]
\draw (0,2)--(7,2); \draw(9,2)--(12,2);
\node[left] at (0.1,0.4) {\Large $a_1^{\phantom{*}}$};
\node[left] at (2,0.4) {\Large $a_1^{*}$};
\node[right] at (4,0.4) {\Large $a_2^{\phantom{*}}$};
\node[right] at (5.9,0.4) {\Large $a_2^{{*}}$};
\node[left] at (10.1,0.4) {\Large $a_g^{\phantom{*}}$};
\node[right] at (11.9,0.4) {\Large $a_g^{{*}}$};
%========================================
\foreach \x in {0,1,2,3,5,6}{\draw (2*\x, 2)--(2*\x,0.5);}
%========================================
\foreach\x in {1,2,3,5,6}
{\begin{scope}[shift={(2*\x, 2)}]
\draw ([shift=(180:0.3cm)]0,0) arc (180:270:0.3cm);
\fill[black] (0,0) circle (3pt);
\end{scope}}
%========================================
\foreach\x in {1,2,3}{\node[left] at (2*\x,1.6) {\Large $j_{\x}$};}
%========================================
\node[left]at(10,1.6){\Large $j_{2g-2}$};
\node[above] at (2.9,1.95) {\Large $b_1$};
\fill[white] (3.2,1.9) rectangle ++(0.185,0.22);
\node[above] at (5,2) {\Large $b_2$};
\node[above] at (11,2) {\Large $b_{2g-2}$};	
\end{scope}
%========================================
\foreach\x in {0,4,10}
{\begin{scope}[shift={(1+\x,-2.28)}, xscale=2, yscale=1.2]
\draw (0,1.5) arc (360:180:0.5 and 1);
\draw (-0.5,0.5) arc (270:180:0.5 and 1);
\draw(-0.45,0.5)--++(-0.13,0.18);\draw(-0.45,0.5)--++(-0.18,-0.13);
\end{scope}
\begin{scope}[shift={(1+\x,-2)}, yscale=-1.2, xscale=2
]
\end{scope}}
\foreach\x in {1,2,3}{\fill[black] (6.5+0.2*\x,1) circle (1pt);}
\begin{scope}[shift={(0.35,0)}] 
\draw[blue, ->-=0.5] (1.5, -2) arc (-90:90:0.5 and 2);
\draw[blue, dashed] (1.5, -2) arc (270:90:0.5 and 2);
\node[right] at (1.8,-1.5) {\Large $\varpi$};
\end{scope}
\end{tikzpicture}}}}\]
Next, push $\varpi$ inside the handlebody, color it by $\Omega$ and endow the ribbon graph with $(-1)$-framing. We have 
\[\hmu'_{g}(\bm{a}, \bm{b}, \bm{j})\cup \fN_{\varpi}^{(-1)}(\Omega)= 
\vcenter{\hbox{\scalebox{0.6}{
\begin{tikzpicture}[line width=2pt]
\begin{scope}[shift={(-1,-1)}]
\draw (0,2)--(7,2); \draw(9,2)--(12,2);
\node[left] at (0.1,0) {\Large $a_1^{\phantom{*}}$};
\node[right] at (1.9,0) {\Large $a_1^{*}$};
\node[left] at (4.1,0) {\Large $a_2^{\phantom{*}}$};
\node[right] at (5.9,0) {\Large $a_2^{{*}}$};
\node[left] at (10.1,0) {\Large $a_g^{\phantom{*}}$};
\node[right] at (11.9,0) {\Large $a_g^{{*}}$};
%========================================
\foreach \x in {0,1,2,3,5,6}{\draw (2*\x, 2)--(2*\x,0.5);}
%========================================
\foreach\x in {1,2,3,5,6}
{\begin{scope}[shift={(2*\x, 2)}]
\draw ([shift=(180:0.3cm)]0,0) arc (180:270:0.3cm);
\fill[black] (0,0) circle (3pt);
\end{scope}}
%========================================
\foreach\x in {1,2,3}{\node[left] at (2*\x+0.05,1.55) {\Large $j_{\x}$};}
%========================================
\node[left]at(10,1.6){\Large $j_{2g-2}$};
\node[above] at (3,1.9) {\Large $b_1$};
\begin{scope}
\draw[red] (3,1.5) arc (-90:-140:0.6 and 1);
\fill[white] (3.42,1.9) rectangle ++(0.185,0.22);
\draw[red] (3,1.5) arc (-90:200:0.6 and 1);  
\end{scope}
\begin{scope}[red, shift={(3.55,2.8)}]
\draw[fill=white] (0,0) circle (10pt);
\node at (0,0) {$-1$};
\end{scope}
\node[above] at (5,2) {\Large $b_2$};
\node[above] at (11,2) {\Large $b_{2g-2}$};	
\end{scope}
%========================================
\foreach\x in {0,4,10}
{\begin{scope}[shift={(1+\x,-2.28)}, xscale=2, yscale=1.2]
\draw[red] (0,0) arc (0:180:0.5 and 1);  
\draw (0,1.5) arc (360:180:0.5 and 1);
\fill[white] (-0.18,0.73) circle (3pt);  
\fill[white] (-0.82,0.73) circle (3pt);  
\draw[red] (0,0) arc (0:90:0.5 and 1);  
\draw (-0.5,0.5) arc (270:180:0.5 and 1);
\draw(-0.45,0.5)--++(-0.13,0.18);\draw(-0.45,0.5)--++(-0.18,-0.13);
\end{scope}
\begin{scope}[shift={(1+\x,-2.28)}, yscale=-1.2, xscale=2
]
\draw[red] (0,0) arc (0:180:0.5 and 1);  
\draw[red] (0,1.5) arc (360:180:0.5 and 1);
\fill[white] (-0.18,0.73) circle (3pt);  
\fill[white] (-0.82,0.73) circle (3pt);  
\draw[red] (0,0) arc (0:90:0.5 and 1);  
\draw[red] (-0.5,0.5) arc (270:180:0.5 and 1);
\end{scope}}
\foreach\x in {1,2,3}{\fill[black] (6.5+0.2*\x,1) circle (1pt);}
\end{tikzpicture}}}}\]
By \cite[Eq.\ (3.1.6)]{BK01}, there exists $\ld' \in \bC^\times$,  independent of $(\bm{a}, \bm{b}, \bm{j})$, such that 
\[\begin{split}
&\hmu'_{g}(\bm{a}, \bm{b}, \bm{j})\cup \fN_{\varpi}^{(-1)}(\Omega)= 
\ld' \cdot 
\vcenter{\hbox{\scalebox{0.6}{
\begin{tikzpicture}[line width=2pt]
\begin{scope}[shift={(-1,-1)}]
\draw (0,2)--(7,2); \draw(9,2)--(12,2);
\node[left] at (0.1,0) {\Large $a_1^{\phantom{*}}$};
\node[right] at (1.9,0) {\Large $a_1^{*}$};
\node[left] at (4.1,0) {\Large $a_2^{\phantom{*}}$};
\node[right] at (5.9,0) {\Large $a_2^{{*}}$};
\node[left] at (10.1,0) {\Large $a_g^{\phantom{*}}$};
\node[right] at (11.9,0) {\Large $a_g^{{*}}$};
%========================================
\foreach \x in {0,1,2,3,5,6}{\draw (2*\x, 2)--(2*\x,0.5);}
%========================================
\foreach\x in {1,2,3,5,6}
{\begin{scope}[shift={(2*\x, 2)}]
\draw ([shift=(180:0.3cm)]0,0) arc (180:270:0.3cm);
\fill[black] (0,0) circle (3pt);
\end{scope}}
%========================================
\foreach\x in {1,2,3}{\node[left] at (2*\x+0.05,1.55) {\Large $j_{\x}$};}
%========================================
\node[left]at(10,1.6){\Large $j_{2g-2}$};
\node[above] at (2.4,1.9) {\Large $b_1$};
\begin{scope}[shift={(3,2)}]
\draw[fill=white] (0,0) circle (10pt);
\node at (0,0) {$+1$};
\end{scope}
\node[above] at (5,2) {\Large $b_2$};
\node[above] at (11,2) {\Large $b_{2g-2}$};	
\end{scope}
%========================================
\foreach\x in {0,4,10}
{\begin{scope}[shift={(1+\x,-2.28)}, xscale=2, yscale=1.2]
\draw[red] (0,0) arc (0:180:0.5 and 1);  
\draw (0,1.5) arc (360:180:0.5 and 1);
\fill[white] (-0.18,0.73) circle (3pt);  
\fill[white] (-0.82,0.73) circle (3pt);  
\draw[red] (0,0) arc (0:90:0.5 and 1);  
\draw (-0.5,0.5) arc (270:180:0.5 and 1);
\draw(-0.45,0.5)--++(-0.13,0.18);\draw(-0.45,0.5)--++(-0.18,-0.13);
\end{scope}
\begin{scope}[shift={(1+\x,-2.28)}, yscale=-1.2, xscale=2
]
\draw[red] (0,0) arc (0:180:0.5 and 1);  
\draw[red] (0,1.5) arc (360:180:0.5 and 1);
\fill[white] (-0.18,0.73) circle (3pt);  
\fill[white] (-0.82,0.73) circle (3pt);  
\draw[red] (0,0) arc (0:90:0.5 and 1);  
\draw[red] (-0.5,0.5) arc (270:180:0.5 and 1);
\end{scope}}
\foreach\x in {1,2,3}{\fill[black] (6.5+0.2*\x,1) circle (1pt);}
\end{tikzpicture}}}}\\ 
&= \ld' \cdot \theta_{b_{1}} \cdot \hmu'_{g}(\bm{a}, \bm{b}, \bm{j})
= \ld' \cdot \ld \cdot \alpha(\bm{a}) \cdot \theta_{b_{1}}  \cdot \Wbase{g}{a}{b}{j}\,.
\end{split}\]
Let $\eta := \ld' \cdot \ld$, then by \eqref{eq:2-23} we have
\[\RTmat{\cC}{g}(\dt_{\varpi})(\Wbase{g}{a}{b}{j}) = \eta \cdot \theta_{b_{1}} \cdot \Wbase{g}{a}{b}{j}\,.\]
In particular, $\RTmat{\cC}{g}(\dt_{\varpi})$ is a diagonal matrix.
\end{example}

\begin{example}\label{ex:g=1}
Consider the Dehn twist along the curve $m_1$ in Figure \ref{fig:lic-gen}. Similar to the computations in Example \ref{ex:g=2}, there exists a constant $\eta' \in \bC^\times$ such that 
\[\begin{split}
&\RTmat{\cC}{g}(\dt_{m_1})(\Wbase{g}{a}{b}{j})
=\eta' \cdot \theta_{a_{1}}\cdot \Wbase{g}{a}{b}{j}
\end{split}\]
where the second equality follows from \cite[Eq.\ (3.1.6)]{BK01}. 

Recall that  the order of the T-matrix $T_{\cC}$ of $\cC$ is finite by Vafa's theorem \cite{Vafa88}, and we set $N:=\ord(T_{\cC})$. Therefore, the above computation implies that
\begin{equation}\label{eq:fin-ord}
\RTrep{\cC}{g}(\dt_{m_1}^N) = [\id]\,.
\end{equation}
Moreover, since the label $a_1$ is arbitrary, we  conclude that the order of $\RTrep{\cC}{g}(\dt_{m_1})$ as an element in $\PGL(\ncg{\cC}{g}, \bC)$ is exactly $N$, i.e., we have
\begin{equation} \label{eq:fin-ord-2}
\ord(\RTrep{\cC}{g}(\dt_{m_1})) = N
\end{equation}
in $\PGL(\ncg{\cC}{g}, \bC)$.    
\end{example}

\begin{example}\label{ex:g=3}
Let $g \ge 3$. Consider the curves $\gamma_{1,g}$ and $\gamma_{2,g}$ bounding a subsurface of genus 1, depicted as follows:
\[\vcenter{\hbox{\scalebox{0.6}{
\begin{tikzpicture}[line width=2pt]
\draw (0,2) arc (90:270:2);
\draw (9, -2) arc (-90:90:2);
\draw (0,2)--(9,2); \draw (0,-2)--(9,-2);
\hole{0,-0.5};\hole{3,-0.5};\hole{6,-0.5};
\node at (9,0) {\Large $\cdots$};
\begin{scope} 
\draw[blue, ->-=0.5] (3, 0.21) arc (-90:90:0.3 and 0.87);
\draw[blue, dashed] (3, 0.21) arc (270:90:0.3 and 0.87);
\node[right] at (3.5,1) {$\gamma_{1, g}$};
\end{scope}
\begin{scope}[yshift=-2.15cm]
\draw[blue, ->-=0.5] (3, 0.21) arc (-90:90:0.3 and 0.87);
\draw[blue, dashed] (3, 0.21) arc (270:90:0.3 and 0.87);
\node[right] at (3.5,1) {$\gamma_{2, g}$};
\end{scope}
\end{tikzpicture}}}}\]
Let $\db_{g} := \dt_{\gamma_{1, g}}\dt_{\gamma_{2, g}}^{-1}$, and consider the following picture:
\[\vcenter{\hbox{\scalebox{0.6}{
\begin{tikzpicture}[line width=2pt]
\draw[gray] (-1,2) arc (90:270:2);
\draw[gray] (11, -2) arc (-90:90:2);
\draw[gray] (-1,2)--(11,2); \draw[gray] (-1,-2)--(11,-2);
\begin{scope}[shift={(0,-0.5)}]
\grayhole{0,0}; \grayhole{4,0};\grayhole{10,0};
\end{scope}
%========================================
% Embedding of basis
%========================================
\begin{scope}[shift={(-1,-1)}]
\draw (0,2)--(7,2); \draw(9,2)--(12,2);
\node[left] at (0.1,0.4) {\Large $a_1^{\phantom{*}}$};
\node[left] at (2,0.4) {\Large $a_1^{*}$};
\node[right] at (4,0.4) {\Large $a_2^{\phantom{*}}$};
\node[right] at (5.9,0.4) {\Large $a_2^{{*}}$};
\node[left] at (10.1,0.4) {\Large $a_g^{\phantom{*}}$};
\node[right] at (11.9,0.4) {\Large $a_g^{{*}}$};
%========================================
\foreach \x in {0,1,2,3,5,6}{\draw (2*\x, 2)--(2*\x,0.5);}
%========================================
\foreach\x in {1,2,3,5,6}
{\begin{scope}[shift={(2*\x, 2)}]
\draw ([shift=(180:0.3cm)]0,0) arc (180:270:0.3cm);
\fill[black] (0,0) circle (3pt);
\end{scope}}
%========================================
\foreach\x in {1,2,3}{\node[left] at (2*\x,1.6) {\Large $j_{\x}$};}
%========================================
\node[left]at(10,1.6){\Large $j_{2g-2}$};
\node[above] at (2.9,1.95) {\Large $b_1$};
\fill[white] (3.2,1.9) rectangle ++(0.185,0.22);
\node[above] at (5,1.95) {\Large $b_2$};
\node[above] at (11,2) {\Large $b_{2g-2}$};	
\end{scope}
%========================================
\foreach\x in {0,4,10}
{\begin{scope}[shift={(1+\x,-2.28)}, xscale=2, yscale=1.2]
\draw (0,1.5) arc (360:180:0.5 and 1);
\draw (-0.5,0.5) arc (270:180:0.5 and 1);
\draw(-0.45,0.5)--++(-0.1,0.18);\draw(-0.45,0.5)--++(-0.11,-0.13);
\end{scope}
\begin{scope}[shift={(1+\x,-2)}, yscale=-1.2, xscale=2
]
\end{scope}}
\foreach\x in {1,2,3}{\fill[black] (6.5+0.2*\x,1) circle (1pt);}
\begin{scope}[shift={(4.25,0.9)}]
\fill[white] (0,0) rectangle ++(0.185,0.22);
\end{scope}
\begin{scope}[shift={(1,0)}] 
\draw[blue, ->-=0.8] (3, 0.21) arc (-90:90:0.35 and 0.87);
\draw[blue, dashed] (3, 0.21) arc (270:90:0.35 and 0.87);
\end{scope}
\begin{scope}[shift={(4.2,-1.75)}]
\fill[white] (0,0) rectangle ++(0.185,0.22);
\end{scope}
\begin{scope}[shift={(1,-2.2)}]
\draw[blue, ->-=0.6] (3, 0.21) arc (-90:90:0.35 and 0.87);
\draw[blue, dashed] (3, 0.21) arc (270:90:0.35 and 0.87);
\end{scope}
\end{tikzpicture}}}}\]
By the above discussions, we can see that there exists a constant $\xi \in \bC^\times$ that is independent of the choice of the basis element $\Wbase{g}{a}{b}{j}$ such that 
\[\RTmat{\cC}{g}(\db_g)(\Wbase{g}{a}{b}{j}) = \xi \cdot \theta^{-1}_{a_{2}}\theta_{b_{2}} \cdot \Wbase{g}{a}{b}{j}\,.\]
In particular, $\RTrep{\cC}{g}(\db_g)$ is diagonal and of finite order.
\end{example}

\subsection{The Torelli group}\label{subsec:torelli}
Let $g \ge 1$ be an integer. Any orientation preserving homeomorphism $f:\Sigma_g \to \Sigma_g$  induces a group isomorphism $f_*: H_1(\Sigma_g, \bZ) \to H_1(\Sigma_g, \bZ)$, and so  $\mcg{g}$ acts  on $H_1(\Sigma_g, \bZ)$ as automorphisms. Since this action preserves the intersection form on $H_1(\Sigma_g, \bZ) \cong \BZ^{2g}$ \cite[Chap.\ 6]{FM12}, the associated representation $\pi_g$ is symplectic, i.e., $\pi_g: \MCG{g} \to \Sp(2g, \bZ)$. By \cite{Bur90}, the image $\pi_g$ is $\Sp(2g, \bZ)$ (see also \cite[Sec.~6.3]{FM12} for modern treatments). The \emph{Torelli group} of $\Sigma_g$, denoted by $\torelli{g}$, is defined as the kernel of  $\pi_g$, and have the  short exact sequence 
\begin{equation}
1 \to \torelli{g} \to \mcg{g} \xrightarrow{\pi_g} \Sp(2g, \bZ) \to 1\,.	
\end{equation}
It is well-known that when $g=1$, $\torelli{1}$ is  trivial, and so $\mcg{1} \cong \SL(2,\bZ)$. However, for $g >1$, $\torelli{g}$ is nontrivial since for any Dehn twist along a separating curve $\gamma$ on $\Sigma_g$, $\dt_\gamma \in \torelli{g}$ by \cite[Sec.~5.6.2]{FM12}. 

The Torelli groups are of particular interest in topology. By \cite{Bir71, Joh79, Pow78}, $\torelli{g}$ was proved to be the normal closure of a single element in $\mcg{g}$ for $g \ge 2$. Such an element is called a \emph{normal generator} of $\torelli{g}$. It is clear that $\torelli{g}$ is generated as a group by the conjugates of a normal generator of $\torelli{g}$ in $\mcg{g}$. In the following, we describe a particular normal generator of $\torelli{g}$, which will be required in next section.

When $g=2$, $\torelli{2}$ is the normal closure of the Dehn twist along any simple closed curve which separates $\Sigma_2$ into two genus 1 sub-surfaces \cite{Pow78}. In this paper, we consider the separating curve $\ITwoGen$  of $\Sigma_2$  depicted in Figure \ref{fig:I2-gen}.
\begin{figure}[ht]
\centering
$\vcenter{\hbox{\scalebox{0.6}{
\begin{tikzpicture}[line width=2pt]
\draw (0,2) arc (90:270:2);
\draw (4, -2) arc (-90:90:2);
\draw (0,2)--(4,2); \draw (0,-2)--(4,-2);
\hole{0,-0.5}; \hole{4,-0.5};
\draw[blue, -<-=0.5] (2,2) arc (90:-90:0.6 and 2);
\draw[blue, dashed] (2,2) arc (90:270:0.6 and 2);
\node at (2.9,-1) {\Large $\ITwoGen$};
\end{tikzpicture}}}}$
\caption{$\torelli{2}$ is normally generated by $\dt_\ITwoGen$.}
\label{fig:I2-gen}
\end{figure}

When $g \ge 3$, $\torelli{g}$ is normally generated by the product $\dt_{\gamma}\dt_{\gamma'}^{-1}$ of Dehn twists, where $\gamma, \gamma'$ is a pair of disjoint, homologous simple closed curves bounding a genus 1 subsurface of $\Sigma_g$ such that neither $\gamma$ nor $\gamma'$ is nullhomologous \cite{Joh79}. In this paper, for any $g \ge 3$, we choose $\gamma_{1, g}$ and $\gamma_{2, g}$ to be the curves bounding the ``left most'' genus 1 subsurface, as is shown in Figure \ref{fig:bp-1} (recall that in this paper surfaces are assumed to be subsets in $\bR^3$ so that their genus are aligned from left to right along the $x$-axis). Then $\db_{g} := \dt_{\gamma_{1, g}}\dt_{\gamma_{2, g}}^{-1}$ normally generates $\torelli{g}$.

\begin{figure}[ht]
\centering
$\vcenter{\hbox{\scalebox{0.6}{
\begin{tikzpicture}[line width=2pt]
\draw (0,2) arc (90:270:2);
\draw (9, -2) arc (-90:90:2);
\draw (0,2)--(9,2); \draw (0,-2)--(9,-2);
\hole{0,-0.5};\hole{3,-0.5};\hole{6,-0.5};
\node at (9,0) {\Large $\cdots$};
\begin{scope} 
\draw[blue, ->-=0.5] (3, 0.21) arc (-90:90:0.3 and 0.87);
\draw[blue, dashed] (3, 0.21) arc (270:90:0.3 and 0.87);
\node[right] at (3.5,1) {$\gamma_{1, g}$};
\end{scope}
\begin{scope}[yshift=-2.15cm]
\draw[blue, ->-=0.5] (3, 0.21) arc (-90:90:0.3 and 0.87);
\draw[blue, dashed] (3, 0.21) arc (270:90:0.3 and 0.87);
\node[right] at (3.5,1) {$\gamma_{2, g}$};
\end{scope}
\end{tikzpicture}}}}$
\caption{A genus  1 bounding pair of curves.}
\label{fig:bp-1}
\end{figure}

\begin{remark}
\label{prop:factor-through-condition}
Let  $\kappa: \MCG{g} \to G$ be any group homomorphism, and $x$ a normal generator of $\torelli{g}$. Then 
$\kappa$ factors through $\Sp(2g, \bZ)$ if and only if $\torelli{g} \subset\ker(\kappa)$, if and only if $x \in \ker(\kappa)$. 
\end{remark}

\begin{definition}
A modular fusion category $\CC$ is called \emph{$g$-Torelli trivial} if $\torelli{g} \subset \ker \RTrep{\CC}{g}$, i.e., $\RTrep{\CC}{g}$ is trivial on $\torelli{g}$.
 \end{definition}

We are interested in the projective representation $\RTrep{\cC}{g} : \MCG{g} \to \PGL(\ncg{\cC}{g})$ with the property $\torelli{g} \subset \ker\RTrep{\cC}{g}$ for a  modular fusion category $\cC$. Let $N := \ord(T_{\cC})$.  An important property of $\RTrep{\cC}{1}$ is that $\ker(\RTrep{\cC}{1})$ is a congruence subgroup of $\SL(2, \bZ)$ of level $N$ \cite[Thm.~6.8]{NS10}. We finish this section by showing that if $\RTrep{\cC}{g}$ factors through $\Sp(2g, \bZ)$ for some $g \ge 2$, then similar congruence property holds.

For any positive integer $m$, let
$$
\Gamma_{m, g} := \ker\left(\Sp(2g, \BZ) \twoheadrightarrow \Sp(2g,\bZ/m\bZ)\right) 
$$
where the underlying map is the natural surjection. The subgroup $\Gamma_{m, g}$  is called a \emph{principal congruence subgroup} of  $\Sp(2g, \BZ)$.  A subgroup of $\Sp(2g, \bZ)$ is called a \emph{congruence subgroup} if it contains $\Gamma_{m, g}$ for some $m \in \bN$, and its \emph{level} is the smallest such $m$.

\begin{thm}
\label{thm:congruence-kernel} Let $\cC$ be a modular fusion category such that $\RTrep{\cC}{g}$ factors through $\s_{\cC,g}:\Sp(2g, \bZ) \to \PGLC$ for some $g \ge 2$. Then $\ker(\s_{\cC,g})$ is a congruence subgroup of $\Sp(2g, \BZ)$ of level $N$, where $N = \ord(T_{\cC})$. Consequently, there exists a group homomorphism $\ol{\rho}_{\cC, g}: \Sp(2g, \bZ/N\bZ) \to \PGL(\ncg{\cC}{g}, \bC)$ such that the following diagram commutes:
\begin{equation}
\xymatrix{
\mcg{g} \ar^-{\RTrep{\cC}{g}}[rr] \ar_-{\pi_g}[d] && \PGL(\ncg{\cC}{g}, \bC) \\
\Sp(2g, \bZ) \ar_-{\s_{\cC, g}}[urr] 
\ar@{->>}[rr]
&&\Sp(2g, \bZ/N\bZ) \ar_-{\ol{\rho}_{\cC, g}}[u]
}    
\end{equation}
In particular, $\RTrep{\cC}{g}$ has finite image. 
\end{thm}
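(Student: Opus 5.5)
The plan is to invoke the congruence subgroup property for $\Sp(2g,\bZ)$ with $g\ge 2$, due to Bass–Milnor–Serre and Mennicke: every finite-index subgroup of $\Sp(2g,\bZ)$ is a congruence subgroup. More precisely, we use the following consequence. For $g\ge 2$, the normal closure of $t^m$, where $t$ is an elementary symplectic transvection, equals the principal congruence subgroup $\Gamma_{m,g}$. This is the form in which Mennicke's theorem is usually stated: the elementary subgroup of level $m$ is $\Gamma_{m,g}$ for rank $\ge 2$.

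First we identify the images of the Lickorish generators. The element $\pi_g(\dt_{m_1})$ is a symplectic transvection $t_{m_1}$, $x\mapsto x+\langle x,[m_1]\rangle [m_1]$. Since $[m_1]$ is primitive and $\Sp(2g,\bZ)$ acts transitively on primitive vectors, it is conjugate to an elementary transvection. By Example \ref{ex:g=1}, $\RTrep{\cC}{g}(\dt_{m_1}^N)=[\id]$, so $\s_{\cC,g}(t_{m_1}^N)=[\id]$. Hence $\ker(\s_{\cC,g})$, being normal, contains the normal closure of $t_{m_1}^N$ in $\Sp(2g,\bZ)$. By Mennicke's theorem this normal closure is $\Gamma_{N,g}$. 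Therefore $\Gamma_{N,g}\subset\ker(\s_{\cC,g})$, so the level $\ell$ exists and $\ell\le N$ in the divisibility sense, i.e.\ the level divides $N$.

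This step needs care: the level is defined as the smallest $m$ with $\Gamma_{m,g}\subset\ker$. We will use the standard fact that $\Gamma_{m,g}\Gamma_{m',g}=\Gamma_{\gcd(m,m'),g}$. This follows from strong approximation (surjectivity of $\Sp(2g,\bZ)\to\Sp(2g,\bZ/\mathrm{lcm})$) together with the Chinese remainder theorem. With it, the set of $m$ with $\Gamma_{m,g}\subset\ker$ is closed under gcd, so the level $\ell$ divides $N$.

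Conversely, we show $N\mid \ell$. If $\Gamma_{\ell,g}\subset\ker(\s_{\cC,g})$, then $t_{m_1}^\ell\in\Gamma_{\ell,g}$. Thus $\RTrep{\cC}{g}(\dt_{m_1}^\ell)=[\id]$, and by \eqref{eq:fin-ord-2} the order $N$ divides $\ell$. Hence $\ell=N$.

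The factorization through $\ol\rho_{\cC,g}$ is then immediate. The map $\Sp(2g,\bZ)\to\Sp(2g,\bZ/N\bZ)$ is surjective with kernel $\Gamma_{N,g}\subset\ker\s_{\cC,g}$, so $\s_{\cC,g}$ descends to the quotient. Finiteness of the image follows since $\Sp(2g,\bZ/N\bZ)$ is finite.

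The main obstacle is the input from Mennicke's theorem: the normal closure in $\Sp(2g,\bZ)$ of the $N$-th power of a single transvection is all of $\Gamma_{N,g}$ for $g\ge2$. I would cite Bass–Milnor–Serre, noting that $E(2g,N\bZ)$ normal closure equals $\Gamma_{N,g}$. I would also check that the normal closure of one elementary transvection power generates $E_{\mathrm{Sp}}(2g,N\bZ)$, using conjugation by Weyl-type elements and the commutator relations of the symplectic root system.
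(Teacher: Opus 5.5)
Your proposal is correct and follows essentially the same route as the paper. Both arguments invoke Mennicke's theorem that $\Gamma_{N,g}$ is the normal closure of $(\id_{2g}+e_{12})^N=\pi_g(\dt_{m_1})^N$, and combine it with $\RTrep{\cC}{g}(\dt_{m_1}^N)=[\id]$ and the exact order $N$ from Example~\ref{ex:g=1}. Your argument that any $m$ with $\Gamma_{m,g}\subset\ker(\s_{\cC,g})$ must be a multiple of $N$ justifies the level more cleanly than the paper's phrasing, and it makes the $\gcd$ step unnecessary; the transitivity-on-primitive-vectors remark is likewise unnecessary, since $[m_1]$ is already part of a symplectic basis.
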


\begin{proof}
By assumption, we have the following commutative diagram 
\begin{equation}
\xymatrix{
\mcg{g} \ar^-{\RTrep{\cC}{g}}[rr] \ar_-{\pi_g}[dr] && \PGL(\ncg{\cC}{g}, \bC) \\
&\Sp(2g, \bZ) \ar_-{\s_{\cC, g}}[ur]&
}    \,.
\end{equation}
According to \cite[Satz 10]{Men65}, $\Gamma_{N, g}$ is the normal closure of the element 
$$(\id_{2g} + N \cdot e_{12}) = (\id_{2g} + e_{12})^N$$ 
in $\Sp(2g, \BZ)$, where $e_{12}$ is the elementary matrix whose (1,2)-entry is 1, and zero elsewhere. It is well-known that the homology classes of the curves $\{m_1, \ell_1, ..., m_g, \ell_g\}$ in Figure \ref{fig:lic-gen} form a symplectic basis of $H_1(\Sigma_g, \bZ)$. Moreover, with respect to this basis, the Dehn twist along $m_1$ is mapped to the matrix $(\id_{2g}+e_{12}) \in \Sp(2g, \bZ)$ via $\pi_g$, i.e., 
\begin{equation}
\label{eq:lifting}
\pi_g(\dt_{m_{1}}) = \id_{2g}+e_{12}\,.
\end{equation}
Therefore, by \eqref{eq:fin-ord}, we have 
\[
\s_{\cC, g}(\id_{2g}+N\cdot e_{12})
=
\s_{\cC, g}(\pi_g(\dt_{m_{1}})^N)
=
\RTrep{\CC}{g}(\dt_{m_{1}}^N)
=
[\id_{\ncg{\cC}{g}}]\,.
\]
As a consequence, $\Gamma_{N, g} \subset \ker(\s_{\cC, g})$. Moreover, by \eqref{eq:fin-ord-2}, $\Gamma_{N, g}$ is the largest congruence subgroup contained in $\ker(\s_{\cC, g})$. Since $\Gamma_{N, g}$ is a finite index subgroup of $\Sp(2g, \BZ)$, the image of $\s_{\cC, g}$ is finite, and so is $\RTrep{\cC}{g}$.
\end{proof}

Comparing to the proof of the congruence property of $\RTrep{\cC}{1}$,  the congruence property in higher genus is almost trivial, since every finite index subgroup of $\Spg$ is congruence for $g > 1$. However, in $\Sp(2,\bZ)= \SL(2,\bZ)$, almost all its finite index subgroup are noncongruence. In view of Lemma \ref{prop:factor-through-condition}, such a congruence property is completely determined by whether the Torelli group $\torelli{g}$ is contained in the kernel of $\RTrep{\cC}{g}$.
We will investigate this question in the next section.

\section{Torelli trivial modular fusion categories} \label{s3}
In this section, we study modular fusion categories $\cC$ which is $g$-Torelli trivial for all $g \ge 1$, which is equivalent to that $\RTrep{\cC}{g}$ factor through $\Sp(2g, \bZ)$ for all $g \ge 1$. We will characterize these modular fusion categories in Theorem \ref{thm:triv-torelli-all-genus}. 

Recall from the previous section,
\begin{itemize}
\item 
$W_{\cC, g}$ is our fixed choice of basis for $\RTspace{\cC}{g}$ (see Lemma \ref{lem:W-basis}) whose graphical presentation is given in \eqref{eq:RTbasis}.

\item 
$\ncg{\cC}{g} = |W_{\cC, g}|$ is the dimension of $\RTspace{\cC}{g}$ (see \eqref{eq:dim-Vg}).

\item The normal generator $\db_g$ of $\torelli{g}$ is given by
$$
\db_g = \left\{\begin{array}{ll}
    \dt_{\ITwoGen} & \text{for $g=2$,  where $\ITwoGen \subset \Sigma_2$ is the curve depicted in Figure \ref{fig:I2-gen};}   \\
    \dt_{\gamma_{1, g}}\dt^{-1}_{\gamma_{2, g}} &\text{for $g \ge 3$, where   $\gamma_{1,g}, \gamma_{2,g} \subset \Sigma_g$ are depicted in Figure \ref{fig:bp-1}}.
\end{array}\right.
$$
\end{itemize}

By Example \ref{ex:g=2}, there exists a constant $\eta \in \bC^\times$ such that 
\begin{equation}\label{eq:g2-action}
\RTmat{\cC}{2}(\dt_{\ITwoGen})(\Wbase{2}{a}{b}{j}) = \eta \cdot \theta_{b_{1}}\cdot \Wbase{2}{a}{b}{j}
\end{equation}
for all basis vector $\Wbase{2}{a}{b}{j} \in W_{\cC, 2}$. Moreover, by Example \ref{ex:g=3}, for any $g \ge 3$, there exits a constant $\xi \in \bC^\times$ such that for any basis vector $\Wbase{g}{a}{b}{j} \in W_{\cC, g}$
\begin{equation}\label{eq:g3-action}
\RTmat{\cC}{3}(\db_g)(\Wbase{g}{a}{b}{j}) = \xi \cdot \theta_{a_2}^{-1}\theta_{b_2} \cdot \Wbase{g}{a}{b}{j}\,.    
\end{equation}

Recall that $\RTspace{\cC}{g} = \cC(\1, I^{\ot g})$ where $I = \bigoplus_{a \in \irr(\cC)} a \ot a^*$. Define 
\begin{equation}
\label{eq:I1}
\AdSet{1}_{\cC} := \bigcup_{b \in \Irr(\CC)}\{a \in \Irr(\CC) \mid N_{b, b^*}^{a} > 0\}
\end{equation}
and 
\[\AdSet{k+1}_{\cC} := \bigcup_{b, c \in \AdSet{k}} \{a \in \Irr(\CC) \mid N_{b, c}^{a} > 0\} \]
for $k \ge 1$. We will omit the subscript and simply write $\AdSet{k}$ when  the context is clear. It is easy to see that $\AdSet{k} \subset \AdSet{k+1}$, and $\AdSet{k}$ is nothing but the set of simple summands of $I^{\ot k}$ for all $k \ge 1$.

\subsection{Torelli trivial modular fusion category for all genus}\label{subsec:g>=3} 

In this section, we classify modular fusion categories whose genus $g$ mapping class group representation has $\torelli{g}$ in its kernel for all $g \ge 1$.

\begin{lemma}\label{lem:g3-imply-g2}
Let $\cC$ be a modular fusion category. If $\torelli{3} \subset \ker (\RTrep{\cC}{3})$, then $\torelli{2} \subset \ker (\RTrep{\cC}{2})$.
\end{lemma}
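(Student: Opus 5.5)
The plan is to compare the two explicit diagonal formulas, \eqref{eq:g3-action} for $\db_3$ and \eqref{eq:g2-action} for $\db_2 = \dt_{\ITwoGen}$. By Remark \ref{prop:factor-through-condition}, $\torelli{g} \subset \ker\RTrep{\cC}{g}$ holds if and only if $\RTrep{\cC}{g}(\db_g) = [\id]$. Since $\RTmat{\cC}{g}(\db_g)$ is diagonal in the basis $W_{\cC,g}$ (Lemma \ref{lem:W-basis}), this happens exactly when all its diagonal entries coincide. So the hypothesis says that $\theta_{a_2}^{-1}\theta_{b_2}$ is independent of the admissible triple $(\bm a,\bm b,\bm j)$ in genus $3$. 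The conclusion we need is that $\theta_{b_1}$ is independent of the admissible triple in genus $2$.

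\emph{Step 1: the genus-$2$ labels.} By \eqref{eq:W-basis-alg} with $g=2$, a triple is admissible if and only if
\[
b_1 \subset a_1\ot a_1^*,\qquad b_2 \subset b_1 \ot a_2,\qquad \1 \subset b_2\ot a_2^*.
\]
The last condition forces $b_2 = a_2$. Hence every $b_1$ appearing in an admissible genus-$2$ triple lies in $\AdSet{1}$. It therefore suffices to prove that $\theta_b = 1$ for all $b\in\AdSet{1}$.

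\emph{Step 2: the genus-$3$ specialization.} In genus $3$ the admissibility conditions are
\[
b_1\subset a_1\ot a_1^*,\quad b_2\subset b_1\ot a_2,\quad b_3\subset b_2\ot a_2^*,\quad b_4\subset b_3\ot a_3,\quad b_4 = a_3.
\]
Fix $b\in\AdSet{1}$ and choose $a_1$ with $b\subset a_1\ot a_1^*$. Set $a_2=\1$, $b_1=b_2=b_3=b$, and $a_3=b_4=a_1$. The last two conditions then require $a_1 \subset b\ot a_1$. By Frobenius reciprocity this is equivalent to $N_{a_1,a_1^*}^{b}>0$ up to duality conventions, and that holds by the choice of $a_1$. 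With appropriate multiplicity indices $\bm j$ this gives an admissible triple with $\theta_{a_2}^{-1}\theta_{b_2} = \theta_b$.

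Taking $b=\1$ in this construction gives the value $\theta_\1 = 1$. The genus-$3$ hypothesis says all these values are equal, so $\theta_b = 1$ for every $b\in\AdSet{1}$.

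\emph{Step 3: conclusion.} By Step 2 and \eqref{eq:g2-action}, $\RTmat{\cC}{2}(\dt_{\ITwoGen}) = \eta\cdot\id$. Hence $\RTrep{\cC}{2}(\db_2)=[\id]$, and Remark \ref{prop:factor-through-condition} gives $\torelli{2}\subset\ker\RTrep{\cC}{2}$.

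The main point needing care is Step 2: checking that the chosen labels really give a nonzero basis vector. This requires $a_1\subset b\ot a_1$ for $b\subset a_1\ot a_1^*$, which follows from the standard symmetries $N_{x,y}^{z}=N_{z,y^*}^{x}$ and related identities of the fusion coefficients. Everything else is direct bookkeeping with the diagonal formulas.
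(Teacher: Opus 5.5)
Your proof is correct, but it argues differently from the paper.

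\textbf{The paper's route.} The paper never uses the bounding-pair map $\db_3$. It takes a separating curve $\ITwoGen'\subset\Sigma_3$ that cuts off the first handle. The twist $\dt_{\ITwoGen'}\in\torelli{3}$ acts diagonally on $W_{\cC,3}$ by $\eta'\theta_{b_1}$. The paper then embeds $\RTspace{\cC}{2}\hookrightarrow\RTspace{\cC}{3}$, sending basis vectors to basis vectors by padding with $a_3=\1$ and $b_3=b_4=\1$. Along this embedding, $\RTmat{\cC}{3}(\dt_{\ITwoGen'})$ and $\RTmat{\cC}{2}(\dt_{\ITwoGen})$ agree up to a scalar. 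So scalarity in genus $3$ restricts to scalarity in genus $2$.

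\textbf{Your route.} You use the normal generator $\db_3$ itself, whose eigenvalue is $\xi\theta_{a_2}^{-1}\theta_{b_2}$. You realize every $b\in\AdSet{1}$ as $b_2$ with $a_2=\1$. Your admissibility check is right: $N_{b,a_1}^{a_1}=N_{a_1,a_1^*}^{b}$ follows from $\cC(b\ot a_1,a_1)\cong\cC(b,a_1\ot a_1^*)$. This gives $\theta_b=1$ on $\AdSet{1}$, and you then read off \eqref{eq:g2-action}. Your argument is the special case $x=\1$ of the $(2,2)$-realizability analysis the paper carries out later, in the proof of Theorem~\ref{thm:triv-torelli-all-genus}.

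\textbf{What each buys.}
\begin{itemize}
\item Your route directly produces the intrinsic condition $\theta|_{\AdSet{1}}\equiv 1$. The paper only reaches that condition by combining this lemma with Lemma~\ref{lem:g2}. Your route also needs no second curve and no embedding.
\item The paper's stabilization argument needs no fusion-rule bookkeeping and does not use the bounding-pair eigenvalue formula. It only uses that a genus-$3$ separating twist restricts to the genus-$2$ one on a basis-compatible subspace.
\end{itemize}
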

\begin{proof}
First, consider the separating curve $\ITwoGen' \subset \Sigma_3$ in Figure \ref{fig:I3-to-I2}. By Example \ref{ex:g=2}, there exists a constant $\eta' \in \bC^\times$ such that
\begin{equation}\label{eq:g3-separating}
\RTmat{\cC}{3}(\dt_{\ITwoGen'})(\Wbase{3}{a}{b}{j}) = \eta' \cdot \theta_{b_{1}} \cdot \Wbase{3}{a}{b}{j}
\end{equation}
for all basis vector $\Wbase{3}{a}{b}{j} \in W_{\cC, 3}$. 
\begin{figure}[ht]
\centering
$\vcenter{\hbox{\scalebox{0.6}{
\begin{tikzpicture}[line width=2pt]
\draw (0,2) arc (90:270:2);
\draw (7, -2) arc (-90:90:2);
\draw (0,2)--(7,2); \draw (0,-2)--(7,-2);
\hole{0,-0.5}; \hole{4,-0.5}; \hole{7,-0.5};
\draw[blue, -<-=0.5] (2,2) arc (90:-90:0.6 and 2);
\draw[blue, dashed] (2,2) arc (90:270:0.6 and 2);
\node at (2.9,-1) {\Large $\ITwoGen'$};
\end{tikzpicture}}}}$
\caption{$\dt_{\ITwoGen'} \in \torelli{3}$.}
\label{fig:I3-to-I2}
\end{figure}

Then note that we have an injection of vector spaces $U: \RTspace{\cC}{2} \hookrightarrow \RTspace{\cC}{3}$ extending the following injective map on the basis vectors:
\[\begin{split}
W_{\cC, 2} &\to W_{\cC, 3}\,, \\
\hmu_{2}((a_1, a_2), (b_1, b_2), (j_1, j_2)) &\mapsto 
\hmu_{3}((a_1, a_2, \1), (b_1, b_2, \1, \1), (j_1, j_2, 1, 1))\,.
\end{split}\]
(The admissibility condition implies that $b_2 = a_2$, but this does not have any impact on our argument below.) Pictorially, the above map is given by 

\begin{figure}[ht]
\centering
$\begin{aligned}
\vcenter{\hbox{\scalebox{0.6}{
\begin{tikzpicture}[line width=2pt]
\begin{scope}[shift={(-1,-1)}]
\draw (0,2)--(6,2);
\node[below] at (0,0) {\Large $a_1^{\phantom{*}}$};
\node[below] at (2,0) {\Large $a_1^{*}$};
\node[below] at (4,0) {\Large $a_2^{\phantom{*}}$};
\node[below] at (6,0) {\Large $a_2^{{*}}$};
%========================================
\foreach \x in {0,1,2,3}{\draw (2*\x, 2)--(2*\x,0);}
%========================================
\foreach\x in {1,2,3}
{\begin{scope}[shift={(2*\x, 2)}]
\draw ([shift=(180:0.3cm)]0,0) arc (180:270:0.3cm);
\fill[black] (0,0) circle (3pt);
\end{scope}}
%========================================
\foreach\x in {1,2}{\node[left] at (2*\x-0.05,1.6) {\Large $j_{\x}$};}
%========================================
\node[above] at (3,2) {\Large $b_1$};
\node[above] at (5,2) {\Large $b_2$};
\end{scope}
\end{tikzpicture}}}}
&\mapsto 
\vcenter{\hbox{\scalebox{0.6}{
\begin{tikzpicture}[line width=2pt]
\begin{scope}[shift={(-1,-1)}]
\draw (0,2)--(6,2);
\draw[dashed] (6,2)--(10,2); 
\node[below] at (0,0) {\Large $a_1^{\phantom{*}}$};
\node[below] at (2,0) {\Large $a_1^{*}$};
\node[below] at (4,0) {\Large $a_2^{\phantom{*}}$};
\node[below] at (6,0) {\Large $a_2^{{*}}$};
\node[below] at (8,0) {\Large $\1$};
\node[below] at (10,0) {\Large $\1$};
%========================================
\foreach \x in {0,1,2,3}{\draw (2*\x, 2)--(2*\x,0);}
\foreach \x in {4,5}{\draw[dashed] (2*\x, 2)--(2*\x,0);}
%========================================
\foreach\x in {1,2,3}
{\begin{scope}[shift={(2*\x, 2)}]
\draw ([shift=(180:0.3cm)]0,0) arc (180:270:0.3cm);
\fill[black] (0,0) circle (3pt);
\end{scope}}
%========================================
\foreach\x in {1,2}{\node[left] at (2*\x-0.05,1.6) {\Large $j_{\x}$};}
%========================================
\node[above] at (3,2) {\Large $b_1$};
\node[above] at (5,2) {\Large $b_2$};
\node[above] at (7,2) {\Large $\1$};
\node[above] at (9,2) {\Large $\1$};	
\end{scope}
\end{tikzpicture}}}}
\end{aligned}
$
\caption{The map $U: \RTspace{\cC}{2} \to \RTspace{\cC}{3}$ on basis vectors.}
\label{fig:two_to_three}
\end{figure}
By comparing \eqref{eq:g2-action} and \eqref{eq:g3-separating}, we immediately have that the following diagram is commutative up to a nonzero scalar:
\[
\begin{tikzcd}[row sep=large] 
\RTspace{\cC}{2} \ar[r, hook, "U"] \ar[d, "{\RTmat{\cC}{2}(\dt_{\ITwoGen})}"'] & \RTspace{\cC}{3} \ar[d, "\RTmat{\cC}{3}(\dt_{\ITwoGen'})"]\\
\RTspace{\cC}{2} \ar[r, hook, "U"] & \RTspace{\cC}{3}
\end{tikzcd} \,.
\]
Since $\ITwoGen'$ is a separating curve, the Dehn twist $\dt_{\ITwoGen'}$ is contained in $\torelli{3}$ \cite[Sec.~5.6.2]{FM12}. By assumption, $\RTrep{\cC}{3}(\dt_{\ITwoGen'}) = [\RTmat{\cC}{3}(\dt_{\ITwoGen'})] = [\id]$. It follows from the commutative diagram that $\RTrep{\cC}{2}(\dt_{\ITwoGen}) = [\RTmat{\cC}{2}(\dt_{\ITwoGen})] = [\id]$.
\end{proof}

\begin{lemma}\label{lem:g2}
Let $\cC$ be a modular fusion category. Then $\torelli{2} \subset \ker(\RTrep{\cC}{2})$  if and only if $\theta_{x} = 1$ for all $x \in \AdSet{1}$.
\end{lemma}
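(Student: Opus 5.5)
By Remark \ref{prop:factor-through-condition}, $\torelli{2} \subset \ker(\RTrep{\cC}{2})$ holds if and only if the single normal generator $\dt_{\ITwoGen}$ lies in the kernel, i.e.\ $\RTmat{\cC}{2}(\dt_{\ITwoGen})$ is a scalar matrix. I would reduce everything to \eqref{eq:g2-action}. It says that $\RTmat{\cC}{2}(\dt_{\ITwoGen})$ is diagonal in the basis $W_{\cC,2}$, with eigenvalue $\eta\,\theta_{b_1}$ on $\Wbase{2}{a}{b}{j}$. Hence the condition is equivalent to the following: $\theta_{b_1}$ takes the same value for every admissible triple $(\bm a,\bm b,\bm j)$ in genus $2$.

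The next step is to identify the set of labels $b_1$ that actually occur. Admissibility requires $N_{a_1,a_1^*}^{b_1}>0$, $N_{b_1,a_2}^{b_2}>0$, and that $\hmu(\1|b_2,a_2^*)\neq 0$. The last condition forces $b_2=a_2$, so we need $N_{b_1,a_2}^{a_2}>0$, equivalently $N_{a_2,a_2^*}^{b_1}>0$ after rotating Hom spaces by duality. Given any $x\in\AdSet{1}$, choose $b\in\irr(\cC)$ with $N_{b,b^*}^x>0$. Setting $a_1=a_2=b$, $b_1=x$, $b_2=b$ with suitable nonzero basis indices gives an admissible triple. Conversely, every admissible $b_1$ lies in $\AdSet{1}$ by definition \eqref{eq:I1}. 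So the set of occurring $b_1$ is exactly $\AdSet{1}$.

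Finally, $\1\in\AdSet{1}$, since $N_{b,b^*}^{\1}=1$; it is realized by taking $b_1=\1$. Because $\theta_{\1}=1$, the twists $\theta_{b_1}$ are all equal precisely when $\theta_x=\theta_{\1}=1$ for every $x\in\AdSet{1}$. This proves both directions at once.

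The only point requiring care is the admissibility bookkeeping. One must check that the top vertex forces $b_2=a_2$, and that $N_{b_1,a_2}^{a_2}>0$ is equivalent to $N_{a_2,a_2^*}^{b_1}>0$, using the standard symmetries $N_{x,y}^z=N_{z^*,x}^{y^*}$ and related identities. This step is routine rather than a genuine obstacle. Everything else follows directly from \eqref{eq:g2-action}.
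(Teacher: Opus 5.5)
Your proposal is correct and follows the paper's proof essentially step for step. You reduce to the normal generator $\dt_{\ITwoGen}$, read off the diagonal action $\eta\,\theta_{b_1}$ from \eqref{eq:g2-action}, realize each $x\in\AdSet{1}$ as a $b_1$-label through the admissible triple $((a,a),(x,a),(1,1))$, and pin the common value at $\theta_{\1}=1$; your extra checks (that the top vertex forces $b_2=a_2$ and that every admissible $b_1$ lies in $\AdSet{1}$) just spell out what the paper's ``the converse follows immediately'' leaves implicit.
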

\begin{proof}
Note that every $x \in \AdSet{1}$ can be the $b_1$-component of some of our chosen basis vectors in $W_{\cC, 2}$. Indeed, by definition, $N_{a, a^*}^{x} \ge 1$ for some $a \in \irr(\cC)$, and so $((a, a), (x, a), (1, 1))$ is admissible. If $\torelli{2} \subset \ker(\RTrep{\cC}{2})$, then  \eqref{eq:g2-action} imply that $\theta_{x} = \theta_{y}$ for all $x, y \in \AdSet{1}$. In particular, since $\1 \in \AdSet{1}$, we have $\theta_{x} =\theta_{\1} = 1$ for all $x \in \AdSet{1}$. The converse follows immediately from \eqref{eq:g2-action}.
\end{proof}

Recall that a simple object $a$ in a fusion category $\cF$ is called \emph{invertible} if $a \ot a^* \cong \1$. A fusion category is called \emph{pointed} if all of its simple objects are invertible. It is easy to see that the set of all invertible objects in an arbitrary fusion category $\cF$  naturally forms a group under the tensor. The maximal pointed fusion subcategory of $\cF$ is denoted by $\cF_{\pt}$. The following result is a refinement of \cite[Prop.\ 6.7]{MW25}.

\begin{thm}\label{thm:triv-torelli-all-genus}
Let $\CC$ be a modular fusion category.  The following statements are equivalent.
\begin{enumerate}
\item
    $\RTrep{\cC}{g}$ factors through $\Sp(2g, \bZ)$ for all $g \ge 1$.
\item 
    $\RTrep{\cC}{g}$ factors through $\Sp(2g, \bZ)$ for all $g \ge 3$.
\item 
    $\RTrep{\cC}{g}$ factors through $\Sp(2g, \bZ)$ for some $g \ge 3$.
\item
    $\RTrep{\cC}{3}$ factors through $\Sp(6, \bZ)$. 
\item
    $\cC$ is pointed.
\end{enumerate} 
\end{thm}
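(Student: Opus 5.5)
The implications (1)$\Rightarrow$(2)$\Rightarrow$(3) are trivial. The substantive steps are (3)$\Rightarrow$(4)$\Rightarrow$(5) and (5)$\Rightarrow$(1).

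\emph{Reducing (3) to (4).} Suppose $\torelli{g}\subset\ker\RTrep{\cC}{g}$ for some $g\ge 3$. I would adapt the argument of Lemma \ref{lem:g3-imply-g2}. Extend basis vectors by trivial labels $a_{4}=\dots=a_g=\1$, which gives an injection $\RTspace{\cC}{3}\hookrightarrow\RTspace{\cC}{g}$. Under this injection, $\RTmat{\cC}{3}(\db_3)$ and $\RTmat{\cC}{g}(\db_g)$ correspond up to scalar. Indeed, by \eqref{eq:g3-action} both act diagonally by $\theta_{a_2}^{-1}\theta_{b_2}$ times a constant, and the labels $a_2,b_2$ are unchanged by the extension. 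Since $\db_g$ normally generates $\torelli{g}$ and lies in the kernel, so does $\db_3$ in genus $3$. Hence (4) holds by Remark \ref{prop:factor-through-condition}.

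\emph{(4)$\Rightarrow$(5).} Assume $\db_3$ acts projectively trivially. Then $\theta_{a_2}^{-1}\theta_{b_2}$ is constant over all admissible triples $(\bm a,\bm b,\bm j)$ in $W_{\cC,3}$. The choice $\bm a=(\1,\1,\1)$, $\bm b=\1$ gives constant $1$. Hence $\theta_{b_2}=\theta_{a_2}$ whenever $b_2\subset b_1\otimes a_2$ with $b_1\in\AdSet{1}$, provided the tree can be completed.

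Completion requires $b_2^*\subset b_3^{-}$-type conditions in genus $3$: we need $b_3\subset b_2\otimes a_3^{*}$-style vertices, and a final vertex into $\1$. Taking $a_3$ appropriately, for instance $a_3=b_2$ so that $b_2\otimes a_3^*\ni\1$, should make any choice of $b_2$ completable. I would check this against the exact shape of \eqref{eq:W-basis-alg}.

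So the condition is: for all simple $a$, all $x\in\AdSet{1}$, and all $y\subset x\otimes a$, we have $\theta_y=\theta_a$. Taking $a=\1$ gives $\theta_x=1$ on $\AdSet{1}$, consistent with Lemma \ref{lem:g2}.

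Now use the twist equation \eqref{eq:tw-def}. The operator $c_{x,a}=\beta_{a,x}\beta_{x,a}$ acts on the $y$-isotypic part of $x\otimes a$ by $\theta_y\theta_x^{-1}\theta_a^{-1}=1$. Thus $x$ centralizes every $a$, so every $x\in\AdSet{1}$ lies in the Müger center $\zmug(\cC)\cong\Vec$. Therefore $\AdSet{1}=\{\1\}$, meaning $a\otimes a^*\cong\1$ for every simple $a$, i.e. $\cC$ is pointed. I expect this to be the main obstacle. The graph combinatorics must genuinely allow $b_1$ to range over all of $\AdSet{1}$ while $a_2$ is arbitrary and $b_2$ ranges over all summands of $b_1\otimes a_2$. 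Also, the double braiding must be computed on the full multiplicity space, not just on one vector. The twist equation handles the latter, because the monodromy is diagonalized by the decomposition of $x\otimes a$.

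\emph{(5)$\Rightarrow$(1).} For pointed $\cC$, $a\otimes a^*\cong\1$, so $\AdSet{1}=\{\1\}$. In every basis vector $b_1=\1$ and $b_2=a_2$, hence $b_{2k}=b_{2k-1}\otimes a_{k+1}$ with $b_{2k-1}=\1$ throughout. Then \eqref{eq:g2-action} and \eqref{eq:g3-action} give scalar matrices for $\db_2$ and for $\db_g$ ($g\ge3$). Genus $1$ is automatic since $\torelli{1}$ is trivial. Thus $\RTrep{\cC}{g}$ kills the normal generator for every $g$, and hence factors through $\Sp(2g,\bZ)$ by Remark \ref{prop:factor-through-condition}.
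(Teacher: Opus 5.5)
Your cycle (1)$\Rightarrow$(2)$\Rightarrow$(3)$\Rightarrow$(4)$\Rightarrow$(5)$\Rightarrow$(1) works. Its core, (4)$\Rightarrow$(5), is the paper's argument. Triviality of $\db_3$ forces $\theta_y=\theta_a$ for every $y\subset x\otimes a$ with $x\in\AdSet{1}$. The twist equation then makes $\beta_{a,x}\beta_{x,a}$ trivial, and modularity gives $\AdSet{1}=\{\1\}$.

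The one step that fails as written is your completion of the tree. In \eqref{eq:W-basis-alg} for $g=3$, the vertices after $b_2$ are $\hmu_{j_3}(b_3|b_2,a_2^*)$, $\hmu_{j_4}(b_4|b_3,a_3)$ and $\hmu(\1|b_4,a_3^*)$. So $b_2$ is paired with $a_2^*$, not $a_3^*$. The last vertex forces $b_4\cong a_3$, so $b_3$ must be a summand of both $b_2\otimes a_2^*$ and $a_3\otimes a_3^*$. Choosing $a_3=b_2$ does not produce such a $b_3$ in general. For example, in the Fibonacci category with $b_1=a_2=x$ and $b_2=\1$, it would force $b_3=x\not\subset \1\otimes\1^*$.

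The correct completion is $b_3=b_1$ together with $a_3=b_4=a_1$:
\begin{itemize}
\item $b_3=b_1$ is admissible because $N_{b_2,a_2^*}^{b_1}=N_{b_1,a_2}^{b_2}>0$ by Frobenius reciprocity.
\item $a_3=b_4=a_1$ is admissible because $b_1\subset a_1\otimes a_1^*$ gives $N_{b_1,a_1}^{a_1}>0$.
\end{itemize}
This is exactly the paper's observation that $(x,y)$ occurs as $(a_2,b_2)$ if and only if $\cC(y,I\otimes x)\neq 0$.

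Otherwise your route differs from the paper's only in bookkeeping, and each change is valid:
\begin{itemize}
\item For (3)$\Rightarrow$(4), you pad genus-$3$ basis vectors with $\1$'s, which is the device of Lemma \ref{lem:g3-imply-g2}. The paper instead shows that the set of realizable pairs is independent of $g\ge 3$.
\item You obtain $\theta_x=1$ on $\AdSet{1}$ directly in genus $3$ by taking $a_2=\1$, $b_2=b_1$. This bypasses Lemmas \ref{lem:g3-imply-g2} and \ref{lem:g2}.
\item You prove (5)$\Rightarrow$(1) directly: $b_1=\1$ and $b_2=a_2$ make both normal generators act by scalars. The paper instead proves (e)$\Rightarrow$(d) and reaches genus $2$ through Lemma \ref{lem:g3-imply-g2}.
\end{itemize}
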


\begin{proof}
Let $g \ge 3$. A pair of simple objects $(x, y)$ appears as $(a_2,b_2)$ in any basis vector $\Wbase{g}{a}{b}{j} \in W_{\cC,g}$ presented in \eqref{eq:W-basis}), called $(2,2)$-realizable in $W_{\cC,g}$,  if and only if the following conditions are satisfied:
$$
\cC(y, I \ot x) \ne 0 \quad \text{ and } \quad \cC(\1, y\ot x^* \ot I^{\ot (g-2)}) \ne 0.
$$
Since $I$ is self-dual and is a summand of $I^{\o (g-2)}$ as $g \ge 3$, we find 
\[\cC(\1, y\ot x^* \ot I^{\ot (g-2)}) \cong  \cC(I^{\ot (g-2)}, y\ot x^*) \cong \cC(y, I^{\ot (g-2)} \ot x)\]
contains a subspace isomorphic to $\cC(y, I \ot x)$. Thus, $(x,y)$ is (2,2)-realizable in $W_{\cC,g}$ if and only if
$$
0 \ne \cC(y, I\ot x) \cong \cC(y\ot x^*, I)
$$
which is independent of $g$. Therefore, $(x,y)$ is $(2,2)$-realizable in $W_{\cC,g}$ for some $g \ge 3$ if and only if it is $(2,2)$-realizable in $W_{\cC,g}$ for all $g \ge 3$.  In particular, $(\1,\1)$ is $(2,2)$-realizable in $W_{\cC,g}$.

Recall from \eqref{eq:g3-action} that there exists a constant $\xi \in \bC^\times$ such that for any $\Wbase{g}{a}{b}{j} \in W_{\cC,g}$, we have 
\[\RTmat{\cC}{g}(\db_g)(\Wbase{g}{a}{b}{j}) = \xi \cdot \theta_{a_{2}}^{-1}\theta_{b_{2}} \cdot \Wbase{g}{a}{b}{j}\,.\]
Therefore, for any $g \ge 3$, $\RTrep{\cC}{g}(\db_g) = [\RTmat{\cC}{g}(\db_g)] = [\id]$  if and only if 
\begin{equation} \label{eq:trivial_condition}
\theta_x^{-1} \theta_y = \theta_\1^{-1} \theta_\1 = 1, \quad \text{or equivalently,} \quad \theta_x = \theta_y
\end{equation}
for all $(x,y)$ which is (2,2)-realizable in $W_{\cC,g}$. It follows from the last paragraph and Lemma \ref{prop:factor-through-condition} that (b), (c) and (d) are equivalent.

 Clearly, (a) implies (b)-(d). By Lemma \ref{lem:g3-imply-g2} and the fact that $\torelli{1}$ is trivial, we can conclude that (a)-(d) are equivalent.

(e) $\Rightarrow$ (d) If $\cC$ is pointed, then $I \cong \ncg{\CC}{1} \1$. Thus, $(x,y)$ is (2,2)-realizable in $W_{\cC,3}$ if and only if $x=y$. Consequently, $\theta_x = \theta_y$ for any $(x,y)$ which is (2,2)-realizable in $W_{\cC,3}$, and so (d) follows.

(d) $\Rightarrow$ (e) Assume  $\RTrep{\cC}{3}$ factors through $\Sp(6, \bZ)$, and so $\RTrep{\cC}{3}(\db_3) = [\id]$. By Lemmas \ref{lem:g3-imply-g2} and \ref{lem:g2}, for any $z \in I^{(1)}$, $\theta_{z}=1$. For any $x \in \irr(\CC)$,  $z\ot x = \sum_{y \in \irr(\cC)} N_{z,x}^y y$  implies $(x,y)$ is (2,2)-realizable whenever $N_{z,x}^y \ne 0$. Thus, \eqref{eq:trivial_condition} implies that $\theta_y = \theta_x$ for any simple subobject $y$ of $z\ot x$. By the twist equation, we have 
\begin{align*}
\theta_{x}(\be_{x, z}\circ \be_{z,x}) & = \theta_{z}\theta_{x}(\be_{x,z}\circ\be_{z,x}) = \theta_{z \ot x} = 
\sum_{y \in \irr(\cC)} \sum_{j =1}^{N_{z, x}^{y}} \theta_{y} \cdot \hmu_{j}(y |z, x) \circ \cmu_{j}(z, x|y)\\
&= \theta_{x} \sum_{y \in \irr(\cC)} \sum_{j =1}^{N_{z, x}^{y}}  \hmu_{j}(y |z, x) \circ \cmu_{j}(z, x|y) = \theta_x \,\id_{z \o x}
\end{align*}
Since $\theta_x \ne 0$, we have $\be_{x, z}\circ\be_{z,x} = \id_{z \ot x}$ for all $z \in \AdSet{1}$ and $x \in \irr(\cC)$. Consequently, $\AdSet{1}$ is in the M\"uger center of $\cC$ (see Section \ref{subsec:mtc}). The modularity $\cC$ implies $\AdSet{1} = \{\1\}$,  which means  $a \ot a^* \cong \1$ for any $a \in \irr(\cC)$. Therefore, $\cC$ is pointed.
\end{proof}

By Theorem \ref{thm:triv-torelli-all-genus} above, for any modular fusion category $\cC$,  exactly one of the following three conditions holds:

\begin{enumerate}[label=(\Roman*)]
\item $\RTrep{\cC}{g}$ factors through $\Sp(2g, \bZ)$ for all $g \ge 1$, or equivalently, $\cC$ is pointed;
\item $\RTrep{\cC}{2}$ factors through $\Sp(4, \bZ)$, but $\RTrep{\cC}{g}$ does not factor through $\Sp(2g, \bZ)$ for any $g \ge 3$;
\item $\RTrep{\cC}{g}$ does not factor through $\Sp(2g, \bZ)$ for any $g \ge 2$.
\end{enumerate}

There are plenty of modular fusion categories satisfying (III) above. Indeed, by Lemma \ref{lem:g2} and Theorem \ref{thm:triv-torelli-all-genus}, as long as a modular fusion category $\cC$ satisfies $\theta_{x} \ne 1$ for some $x \in \AdSet{1}$, then it satisfies (III). The simplest example  is  the quantum group modular fusion category $\cC = \PSU(2)_{3}$ (we adopt the conventions in \cite{NWZ-trans}). In fact, $\cC$ is a Fibonacci modular fusion category with $\irr(\cC)=\{\1, x\}$ and the fusion rule $x \ot x = \1 \oplus x$, $d_x = \frac{1+\sqrt{5}}{2}$, $\theta_{x} = \zeta_5^2$ and $I^{(1)} = \irr(\cC)$. Therefore, the mapping class group representations of $\cC = \PSU(2)_{3}$ cannot factor through $\Sp(2g, \bZ)$ for any $g \ge 2$.

Now we give an example of a modular fusion category that satisfies (II) above. Recall that the \emph{adjoint subcategory} $\cF_{\ad}$ of a fusion category $\cF$ is the full fusion subcategory of $\cF$ generated by objects in $\AdSet{1}_{\cF}$ \cite[Sec.~3]{GN08}. It is easy to see that $\AdSet{1}_{\cF} \subset \Irr(\cF_{\ad}) = \AdSet{n}_{\cF}$ for some integer $n$.

\begin{example}\label{ex:DwD8}
Let $\cC = \zdr(\Vec_{E_{8}}^\w)$ be the Drinfeld center of the pointed fusion category $\Vec_{E_{8}}^\w$, where $E_8$ is the elementary 2-group of order 8, and $\omega$ is a normalized 3-cocycle of weight 1 of $E_8$ (e.g., the alternating tricharacter of $E_8$, see \cite[Sec.\ 5]{GMN07} for details). The modular data of $\cC$ is given as follows:
\[
S = \left(
\begin{smallmatrix}
1 & 1 & 1 & 1 & 1 & 1 & 1 & 1 & 2 & 2 & 2 & 2 & 2 & 2 & 2 & 2 & 2 & 2 & 2 & 2 & 2 & 2 \\
1 & 1 & 1 & 1 & 1 & 1 & 1 & 1 & 2 & 2 & 2 & 2 & -2 & -2 & 2 & 2 & -2 & -2 & -2 & -2 & -2 & -2 \\
1 & 1 & 1 & 1 & 1 & 1 & 1 & 1 & 2 & 2 & -2 & -2 & 2 & 2 & -2 & -2 & 2 & 2 & -2 & -2 & -2 & -2 \\
1 & 1 & 1 & 1 & 1 & 1 & 1 & 1 & 2 & 2 & -2 & -2 & -2 & -2 & -2 & -2 & -2 & -2 & 2 & 2 & 2 & 2 \\
1 & 1 & 1 & 1 & 1 & 1 & 1 & 1 & -2 & -2 & 2 & 2 & 2 & 2 & -2 & -2 & -2 & -2 & 2 & 2 & -2 & -2 \\
1 & 1 & 1 & 1 & 1 & 1 & 1 & 1 & -2 & -2 & 2 & 2 & -2 & -2 & -2 & -2 & 2 & 2 & -2 & -2 & 2 & 2 \\
1 & 1 & 1 & 1 & 1 & 1 & 1 & 1 & -2 & -2 & -2 & -2 & 2 & 2 & 2 & 2 & -2 & -2 & -2 & -2 & 2 & 2 \\
1 & 1 & 1 & 1 & 1 & 1 & 1 & 1 & -2 & -2 & -2 & -2 & -2 & -2 & 2 & 2 & 2 & 2 & 2 & 2 & -2 & -2 \\
2 & 2 & 2 & 2 & -2 & -2 & -2 & -2 & 4 & -4 & 0 & 0 & 0 & 0 & 0 & 0 & 0 & 0 & 0 & 0 & 0 & 0 \\
2 & 2 & 2 & 2 & -2 & -2 & -2 & -2 & -4 & 4 & 0 & 0 & 0 & 0 & 0 & 0 & 0 & 0 & 0 & 0 & 0 & 0 \\
2 & 2 & -2 & -2 & 2 & 2 & -2 & -2 & 0 & 0 & 4 & -4 & 0 & 0 & 0 & 0 & 0 & 0 & 0 & 0 & 0 & 0 \\
2 & 2 & -2 & -2 & 2 & 2 & -2 & -2 & 0 & 0 & -4 & 4 & 0 & 0 & 0 & 0 & 0 & 0 & 0 & 0 & 0 & 0 \\
2 & -2 & 2 & -2 & 2 & -2 & 2 & -2 & 0 & 0 & 0 & 0 & 4 & -4 & 0 & 0 & 0 & 0 & 0 & 0 & 0 & 0 \\
2 & -2 & 2 & -2 & 2 & -2 & 2 & -2 & 0 & 0 & 0 & 0 & -4 & 4 & 0 & 0 & 0 & 0 & 0 & 0 & 0 & 0 \\
2 & 2 & -2 & -2 & -2 & -2 & 2 & 2 & 0 & 0 & 0 & 0 & 0 & 0 & 4 & -4 & 0 & 0 & 0 & 0 & 0 & 0 \\
2 & 2 & -2 & -2 & -2 & -2 & 2 & 2 & 0 & 0 & 0 & 0 & 0 & 0 & -4 & 4 & 0 & 0 & 0 & 0 & 0 & 0 \\
2 & -2 & 2 & -2 & -2 & 2 & -2 & 2 & 0 & 0 & 0 & 0 & 0 & 0 & 0 & 0 & 4 & -4 & 0 & 0 & 0 & 0 \\
2 & -2 & 2 & -2 & -2 & 2 & -2 & 2 & 0 & 0 & 0 & 0 & 0 & 0 & 0 & 0 & -4 & 4 & 0 & 0 & 0 & 0 \\
2 & -2 & -2 & 2 & 2 & -2 & -2 & 2 & 0 & 0 & 0 & 0 & 0 & 0 & 0 & 0 & 0 & 0 & 4 & -4 & 0 & 0 \\
2 & -2 & -2 & 2 & 2 & -2 & -2 & 2 & 0 & 0 & 0 & 0 & 0 & 0 & 0 & 0 & 0 & 0 & -4 & 4 & 0 & 0 \\
2 & -2 & -2 & 2 & -2 & 2 & 2 & -2 & 0 & 0 & 0 & 0 & 0 & 0 & 0 & 0 & 0 & 0 & 0 & 0 & -4 & 4 \\
2 & -2 & -2 & 2 & -2 & 2 & 2 & -2 & 0 & 0 & 0 & 0 & 0 & 0 & 0 & 0 & 0 & 0 & 0 & 0 & 4 & -4
\end{smallmatrix}
\right)
\]
and $T = \diag(1, 1, 1, 1, 1, 1, 1, 1, 1, -1, 1, -1, 1, -1, 1, -1, 1, -1, 1, -1, -i, i)$. Clearly, $\cC$ is not pointed, and so $\RTrep{\cC}{g}$ does not factor through $\Sp(2g, \bZ)$ for any $g \ge 3$. Moreover, 
$$
\cC_{\ad} = \cZ_2(\cC_{\pt},\cC) = \cC_{\pt}
$$
and so $\theta_x =1$ for $x \in \AdSet{1}$. By Lemma \ref{lem:g2}, $\RTrep{\cC}{2}$ factors through $\Sp(4, \bZ)$. Therefore, $\zdr(\Vec_{E_{8}}^\w)$ is an example satisfying (II) above.
\end{example}
 
If $\cF$ is a ribbon fusion category with the ribbon structure $\theta$, a fusion subcategory  $\cE$  of $\cF$, then $\cE$ is called \emph{isotropic} if $\theta_x =1$ for $x \in \irr(\cE)$.  

The modular fusion category $\cC=\zdr(\Vec_{E_{8}}^\w)$ in Example \ref{ex:DwD8} satisfies $\AdSet{1} = \irr(\cC_{\ad})=\irr(\cC_{\pt})$.  By reading the T-matrix, $\cC_{\ad}$ is isotropic. Note also that all pointed modular fusion categories have isotropic adjoint subcategories, as they are equivalent to $\Vec$.
\begin{cor} \label{c:iso_adj}
Let $\cC$ be a non-pointed modular fusion category with isotropic adjoint subcategory. Then $\RTrep{\cC}{2}$ factors through $\Sp(4, \bZ)$ but 
$\RTrep{\cC}{g}$ does not factor through $\Sp(2g, \bZ)$ for all $g >2$.
\end{cor}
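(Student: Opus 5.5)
The corollary follows by combining Lemma \ref{lem:g2} with Theorem \ref{thm:triv-torelli-all-genus}. There are two assertions: the genus $2$ statement, which uses only the isotropy hypothesis, and the genus $g\ge 3$ statement, which uses only non-pointedness.

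For genus $2$, first observe that every element of $\AdSet{1}$ is a simple object of $\cC_{\ad}$. By definition, $\cC_{\ad}$ is the full fusion subcategory generated by the simple summands of the objects $a \ot a^*$ for $a \in \irr(\cC)$. Each $x \in \AdSet{1}$ is such a summand, so $x \in \irr(\cC_{\ad})$, and the paper records the inclusion $\AdSet{1}_{\cC} \subset \irr(\cC_{\ad})$. Since $\cC_{\ad}$ is isotropic, $\theta_x = 1$ for every $x \in \irr(\cC_{\ad})$, and in particular for every $x \in \AdSet{1}$. Lemma \ref{lem:g2} then gives $\torelli{2} \subset \ker(\RTrep{\cC}{2})$. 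By Remark \ref{prop:factor-through-condition}, this means $\RTrep{\cC}{2}$ factors through $\Sp(4,\bZ)$.

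For $g \ge 3$, suppose toward a contradiction that $\RTrep{\cC}{g}$ factors through $\Sp(2g,\bZ)$ for some $g>2$. Then condition (c) of Theorem \ref{thm:triv-torelli-all-genus} holds. Since (c) and (e) are equivalent, $\cC$ would be pointed, contradicting the hypothesis. Hence $\RTrep{\cC}{g}$ fails to factor through $\Sp(2g,\bZ)$ for every $g>2$.

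The only step that needs any care is the inclusion $\AdSet{1} \subset \irr(\cC_{\ad})$. It is immediate from the definition of the adjoint subcategory and is already noted in the text, so I expect no real obstacle.
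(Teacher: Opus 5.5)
Your proof is correct and matches the paper's argument, which is the one-line combination of the definition of $\cC_{\ad}$ (giving $\theta_x=1$ on $\AdSet{1}$), Lemma~\ref{lem:g2} for genus $2$, and the equivalence (c)$\Leftrightarrow$(e) of Theorem~\ref{thm:triv-torelli-all-genus} for $g\ge 3$. You have simply written out the steps the paper leaves implicit.
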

\begin{proof}
It follows immediately from the definition, Lemma \ref{lem:g2} and Theorem \ref{thm:triv-torelli-all-genus}.
\end{proof}

\subsection{Modular fusion categories with isotropic adjoint}\label{subsec:g=2}
Corollary \ref{c:iso_adj} provides a motivation for studying modular fusion categories with isotropic adjoint. In this section, we establish a characterization of these modular fusion categories.

We start by showing that the pivotal dimensions of the objects in the adjoint subcategory are independent of the pivotal structures of the underlying fusion category, which could be known to some experts. We include a proof here for completeness. 

\begin{lemma}\label{lem:j-ad}
Let $\FF$ be fusion category with pivotal structures $\piv, \piv'$. Then for any $v \in \FF_{\ad}$, we have $(\piv^{-1}\piv')_v = \id_v$ and $d^\piv_v = d^{\piv'}_v$.
\end{lemma}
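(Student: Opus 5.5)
The two pivotal structures differ by a tensor automorphism of the identity functor. Set $\phi := \piv^{-1}\piv' \in \operatorname{Aut}_\otimes(\id_\FF)$. Since $\piv$ and $\piv'$ are both monoidal natural isomorphisms $\id_\FF \to (-)^{**}$, $\phi$ is a monoidal natural automorphism of $\id_\FF$. On each simple $x$, $\phi_x$ is a scalar $\phi(x)\in\bC^\times$. Monoidality, $\phi_{x\ot y} = \phi_x\ot\phi_y$, gives $\phi(z) = \phi(x)\phi(y)$ whenever $N_{x,y}^z>0$. It also gives $\phi(\1)=1$.

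Next we show that $\phi$ is trivial on $\AdSet{1}$. I would first show $\phi(x^*) = \phi(x)^{-1}$ for simple $x$. Since $\1$ is a summand of $x\ot x^*$ (via $\coev_x$), the multiplicativity above gives $1=\phi(\1) = \phi(x)\phi(x^*)$. Alternatively, one can use naturality of $\phi$ applied to $\coev_x:\1\to x\ot x^*$, which gives $\phi_{x\ot x^*}\circ\coev_x = \coev_x$. Now for any $z\in\AdSet{1}$ there is $b$ with $N_{b,b^*}^z>0$. Hence $\phi(z)=\phi(b)\phi(b^*)=1$. Since $\Irr(\FF_{\ad}) = \AdSet{n}$ is generated from $\AdSet{1}$ by taking simple summands of tensor products, induction on $k$ using multiplicativity gives $\phi(a)=1$ for all $a\in\AdSet{k}$. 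By semisimplicity and naturality, $\phi_v=\id_v$ for every object $v\in\FF_{\ad}$, i.e.\ $(\piv^{-1}\piv')_v=\id_v$.

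For the dimensions, $\piv'_v = \piv_v\circ\phi_v = \piv_v$ for $v\in\FF_{\ad}$. The left pivotal trace $d^{\piv'}_v = \ev_{v^*}\circ(\piv'_v\ot\id_{v^*})\circ\coev_v$ depends on the pivotal structure only through $\piv_v$, so it equals $d^\piv_v$.

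The main point requiring care is the claim that $\phi$ is a \emph{monoidal} automorphism, i.e.\ that the composite of $\piv'$ with the inverse of the tensor isomorphism $\piv$ is compatible with the tensor structure of $(-)^{**}$. This holds because both are isomorphisms of tensor functors to the same tensor functor $(-)^{**}$. In the strict pivotal setting adopted earlier, one only needs to note that both are natural isomorphisms compatible with the (identity) monoidal structure of $(-)^{**}$.
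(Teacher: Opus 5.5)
Your proof is correct and follows essentially the same route as the paper: both note that $\piv^{-1}\piv'$ lies in $\Aut_{\ot}(\id_\FF)$, show that such automorphisms act trivially on $\FF_{\ad}$, and then use $\piv'_v=\piv_v$ to get equality of dimensions. The only difference is that the paper cites the grading of $\FF$ by the dual group of $\Aut_{\ot}(\id_\FF)$ from \cite{GN08} to get $\FF_{\ad}=\FF_{\e}$, whereas you prove the needed inclusion $\FF_{\ad}\subseteq\FF_{\e}$ directly, using multiplicativity of the scalars $\phi(x)$ on simple objects and induction over $\AdSet{k}$.
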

\begin{proof}
Let $\fA := \Aut_{\ot}(\id_{\FF})$ be the group of automorphisms of the monoidal functor $\id_{\FF}$. Then $\fA$ is an abelian group by \cite{GN08}, and we denote its dual group by $\hat{\fA}$. According to the proof of \cite[Prop.~3.9]{GN08}, $\hat{\fA}$ gives rise to a faithful grading $\FF = \bigoplus_{\chi\in \hat{\fA}} \FF_\chi$ by setting
\[
\irr(\FF_\chi) := \{v \in \irr(\FF) \mid \forall \Phi \in \fA,\ \Phi_v = \chi(\Phi) \cdot\id_v\}
\]
for any $\chi \in \hat{\fA}$. By definition, $\FF_\ad = \FF_\e$, where $\e$ is the trivial character on $\fA$. In other words, for any $v \in \FF_\ad$ and any $\Phi\in \fA$, $\Phi_v = \id_v$.

Since $\piv^{-1}\piv' \in \fA$, for any $v \in \FF_\ad$, we have $(\piv^{-1}\piv')_v = \id_v$. Therefore, we have
\[
d^\piv_v =  \ev_{v^*}(\piv_v \ot \id_{v*})\coev_v
= \ev_{v^*}(\piv'_v \ot \id_{v*})\coev_v = d^{\piv'}_v
\]
for any $v \in \FF_\ad$.
\end{proof}

 Let $\cF$ be a braided fusion category with braiding $\be$, and let $u: \id_{\cF} \to (-)^{**}$ be the \emph{Drinfeld isomorphism} defined by
\[u_a := (\ev_{a} \ot \id_{a^{**}})\circ (\id_{a^*} \ot \be_{a^{**}, a})\circ (\coev_{a^*} \ot \id_a)\]
for $a \in \cF$. Then for any pivotal structure $\piv$ on $\cF$, the natural isomorphism $\theta^{\piv}: \id_{\cC} \to \id_{\cC}$ defined by 
\begin{equation}\label{eq:theta-piv}
\theta^{\piv}_a := \piv_{a}^{-1}\circ u_a \quad\text{for all $a \in \cF$}
\end{equation}
is a twist on $\cF$ (see \cite[Def.~8.10.1]{EGNO} or \cite[p.38]{NS07}), i.e., $\theta^\piv$ satisfies the twist equation \eqref{eq:tw-def}. We call $\theta^{\piv}$ the twist associated to $\piv$. The equation \eqref{eq:theta-piv} defines a one-to-one correspondence between twists and pivotal structures of $\cF$. It is well-known that $\piv$ is spherical if and only if  $\theta^{\piv}$ is a ribbon structure. Thus, if $\cF$ is a ribbon fusion category with ribbon structure $\theta$, then $\piv^{\theta} := u \circ \theta^{-1}$ is a spherical structure on $\cF$, and we call $\piv^{\theta}$ the spherical structure associated to $\theta$. In the following, when not specified, we will always assume that a ribbon fusion category is equipped with the associated spherical pivotal structure, and we will write the corresponding pivotal dimension of an object $x$  as $d_x$. 

One can define the categorical trace of the Drinfeld isomorphism $u_a : a \to a^{**}$ as $\tr(u_a) := \ev_{a^*}(u_a \o \id)\coev_a$. For any simple object $a$, we find
\begin{equation}\label{eq:tr_drin}
d^{\piv}_a \theta^{\piv}_{a} = \tr^{\piv}(\theta^{\piv}_{a}) 
= \ev_{a^*}  ((\piv_{a} \circ \theta^{\piv}_a) \ot\id_{a^*}) \coev_{a}
= \ev_{a^*} (u_a \ot\id_{a^*})  \coev_{a} =\tr(u_a)
\end{equation}
which is independent of pivotal structure $\piv$ of $\cF$. 

A fusion category $\cF$ is called \emph{pseudounitary} if $\FPdim(\cF) = \dim(\cF)$. By \cite[Prop.\ 8.23]{ENO05}, a pseudounitary fusion category admits a canonical spherical structure whose corresponding pivotal dimension of any object is its Frobenius-Perron dimension. We call such spherical structure the \emph{canonical spherical structure} of a pseudounitary fusion category.  In particular, a pseudounitary braided fusion category has a canonical ribbon structure associated to its canonical spherical structure.

\begin{cor}\label{cor:pu-ad}
Let $\cF$ be a pseudounitary fusion category. For any pivotal structure $\piv$ on $\FF$, we have $d^\piv_v = \FPdim(v)>0$ for any $v \in \FF_\ad$. If, in addition,  $\FF$ is braided, then for any other pivotal structure $\piv'$ on $\cF$, we have $\theta^{\piv}_v = \theta^{\piv'}_v$ for all $v \in \cF_{\ad}$.
\end{cor}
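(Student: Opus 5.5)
The plan is to deduce both claims from Lemma \ref{lem:j-ad} together with \eqref{eq:tr_drin}. The one fact used about pseudounitary categories is \cite[Prop.~8.23]{ENO05}: $\FF$ admits a canonical spherical structure $\piv_0$ with $d^{\piv_0}_x = \FPdim(x)$ for every object $x$.

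For the first claim, let $\piv$ be any pivotal structure on $\FF$ and let $v\in\FF_\ad$. Applying Lemma \ref{lem:j-ad} to the pair $\piv,\piv_0$ gives $d^\piv_v = d^{\piv_0}_v = \FPdim(v)$. For $v\neq 0$, positivity is automatic, since every nonzero object has $\FPdim(v)\ge 1>0$. The argument works for arbitrary objects of $\FF_\ad$, and not only simple ones, because Lemma \ref{lem:j-ad} is stated for all $v\in\FF_\ad$; alternatively one can reduce to simple summands by additivity of both dimensions.

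For the second claim, assume $\FF$ is braided and let $\piv,\piv'$ be two pivotal structures. By \eqref{eq:theta-piv}, $\theta^\piv_v = \piv_v^{-1}\circ u_v$ and $\theta^{\piv'}_v = \piv'^{-1}_v\circ u_v$. Hence
\[
\theta^{\piv'}_v = \piv'^{-1}_v\circ \piv_v\circ \theta^{\piv}_v = \bigl((\piv^{-1}\piv')_v\bigr)^{-1}\circ\theta^\piv_v .
\]
Lemma \ref{lem:j-ad} says $(\piv^{-1}\piv')_v=\id_v$ for $v\in\FF_\ad$, so $\theta^{\piv'}_v = \theta^\piv_v$.

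A second route works for simple $v$. By \eqref{eq:tr_drin}, $d^\piv_v\theta^\piv_v = \tr(u_v) = d^{\piv'}_v\theta^{\piv'}_v$. Since $d^\piv_v = d^{\piv'}_v = \FPdim(v)\neq 0$ by the first claim, we can divide to get the same conclusion. This route shows where pseudounitarity matters, namely in guaranteeing that the dimensions are nonzero.

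No real obstacle is expected. The only point to watch is that the first argument uses nothing beyond Lemma \ref{lem:j-ad} and the definition \eqref{eq:theta-piv}, whereas the trace argument needs $d_v\neq0$, which the first claim supplies.
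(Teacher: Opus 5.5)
Your proof is correct. The first claim is handled exactly as in the paper: compare $\piv$ with the canonical spherical structure $\piv_0$ via Lemma \ref{lem:j-ad}.

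For the second claim, your ``second route'' is the paper's argument. The paper uses \eqref{eq:tr_drin} to get $d^{\piv}_a\theta^{\piv}_a=\tr(u_a)=d^{\piv_0}_a\theta^{\piv_0}_a$ for simple $a$. It then divides by $d^{\piv}_v=d^{\piv_0}_v=\FPdim(v)>0$ for $v\in\irr(\FF_\ad)$, and concludes by comparing each pivotal structure with $\piv_0$.

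Your primary argument is genuinely different and more direct. By \eqref{eq:theta-piv}, $\theta^{\piv'}_v=\bigl((\piv^{-1}\piv')_v\bigr)^{-1}\circ\theta^{\piv}_v$, so the first assertion of Lemma \ref{lem:j-ad}, namely $(\piv^{-1}\piv')_v=\id_v$, gives the equality at once. This works for arbitrary (not necessarily simple) $v\in\FF_\ad$. It involves no division by dimensions and no pseudounitarity, so it shows the second assertion holds in every braided fusion category. What the paper's scalar route buys is the link to the pivotal-independence of $\tr(u_a)$. That link is what gets reused in Lemma \ref{cor:Tan_cri}, for simple objects that need not lie in $\FF_\ad$.

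One small correction to your commentary. Even in the trace route, pseudounitarity is not what guarantees nonzero dimensions. The equality $d^{\piv}_v=d^{\piv'}_v$ already follows from Lemma \ref{lem:j-ad} alone. Moreover, the pivotal dimension of a simple object in a fusion category is never zero. Pseudounitarity is needed only for the identification $d^\piv_v=\FPdim(v)>0$ in the first claim.
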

\begin{proof}
Let $\piv_0$ be the canonical spherical structure of $\cF$. By Lemma \ref{lem:j-ad}, for any $v \in \FF_\ad$, we have $d^\piv_v = d^{\piv_{0}}_v = \FPdim(v)$. If, in addition, $\FF$ is braided, then  for any $a \in \irr(\cF)$, we have
\[
d^{\piv}_a \cdot \theta^{\piv}_{a}  =\tr(u_a) = d^{\piv_0}_a \cdot \theta^{\piv_0}_{a}
\]
by \eqref{eq:tr_drin}, which implies $\theta^{\piv}_{v} = \theta^{\piv_0}_v$ for $v \in \irr(\cF_\ad)$. Now, our last assertion follows.
\end{proof}

In general,  a braided fusion category is isotropic for a ribbon structure, and is not isotropic for another ribbon structure. For example, the braided fusion category $\Rep(\bZ/2\bZ)$ has 2 ribbon structures, and it is isotropic when equipped with the canonical ribbon structure but it is not isotropic when equipped with the other.

However, Corollary \ref{cor:pu-ad} shows that, if the adjoint subcategory $\cC_{\ad}$ of a modular fusion category $\cC$ is isotropic, then every ribbon structure on $\cC$ restricts to the identity on $\cC_{\ad}$, i.e., $\cC_{\ad}$ is isotropic with respect to any ribbon structure on $\cC$.

Now we briefly review some properties of isotropic fusion subcategories of ribbon fusion categories. Let $\cC$ be a ribbon fusion category with ribbon structure $\theta$, and let $\cE$ be an isotropic fusion subcategory of $\cC$. By the twist equation \eqref{eq:tw-def}, $\cE$ is symmetric. Hence, by Deligne's theorems \cite{Del02}, there exists a finite group $G$ such that $\cE$ is braided equivalent to either $\Rep(G)$ or $\Rep(G, z)$, where $z \in G$ is a central element of order 2 which endows  $\Rep(G)$ with a braiding, see op.~cit. for more details. 
We call $\cE$ Tannakian (resp.~super Tannakian) if $\cE$ is braided equivalent to $\Rep(G)$ (resp.~$\Rep(G, z)$). 
 
\begin{lemma} \label{cor:Tan_cri}
Let $\cE$ be a symmetric fusion category equipped with a spherical structure $\piv$. Then $\cE$ is Tannakian if and only if $d_a^\piv \theta_a^\piv > 0$ for all $a \in \irr(\cE)$.
\end{lemma}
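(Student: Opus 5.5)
The plan is to reduce everything to the pivotal-independent quantity $\tr(u_a)$ from \eqref{eq:tr_drin} and then compute it in Deligne's standard models. By \eqref{eq:tr_drin}, for every simple $a \in \irr(\cE)$,
\[
d^\piv_a\,\theta^\piv_a = \tr(u_a),
\]
where $u$ is the Drinfeld isomorphism of the braided category $\cE$. The right-hand side depends only on the braided fusion structure of $\cE$, not on $\piv$. So the condition ``$d^\piv_a\theta^\piv_a>0$ for all $a \in \irr(\cE)$'' is an invariant of the braided fusion category $\cE$, and may be checked after any convenient identification of $\cE$.

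The first step is to check that $\tr(u_a)$ is preserved by braided tensor equivalences. If $F:\cE\to\cE'$ is a braided equivalence, then $F$ carries duals to duals, with $\ev,\coev$ corresponding under the monoidal structure of $F$. Since $u_a$ is built only from $\ev$, $\coev$ and the braiding, we get $F(u_a)=u_{F(a)}$ up to the canonical isomorphisms $F(a^{**})\cong F(a)^{**}$. Likewise the scalar $\ev_{a^*}(u_a\ot\id)\coev_a \in \End(\1)=\bC$ is preserved. This is routine coherence bookkeeping. By Deligne's theorem \cite{Del02}, we may then assume $\cE=\Rep(G)$ or $\cE=\Rep(G,z)$ with $z\neq 1$ central of order $2$.

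The second step is to compute $u$ in these models, using the standard vector-space duality $V^{**}\cong V$. In $\Rep(G)$ the braiding is the flip, and $u_V$ is the canonical identification $V\to V^{**}$. Hence $\tr(u_V)=\dim_\bC V>0$ for every simple $V$, which gives the ``only if'' direction. In $\Rep(G,z)$ the braiding is the flip modified by the sign $(-1)^{|v||w|}$ on the $\pm1$-eigenspaces of $z$. Tracing through the definition of $u$, $u_V$ becomes the canonical identification composed with the action of $z$. Thus for a simple $V$ on which $z$ acts by the scalar $\epsilon_V=\pm1$, we have $\tr(u_V)=\epsilon_V\dim V$.

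The third step gives the ``if'' direction. Since $z\neq 1$, $z$ acts nontrivially on the regular representation, so some simple $V$ has $\epsilon_V=-1$ and therefore $\tr(u_V)<0$. Consequently, if $d^\piv_a\theta^\piv_a>0$ for all simple $a$, then $\cE$ cannot be equivalent to any $\Rep(G,z)$ with $z\ne1$, and so $\cE$ is Tannakian. The Tannakian and super Tannakian cases are disjoint by Deligne's uniqueness, so this suffices.

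The main obstacle is the sign computation of $u_V$ in $\Rep(G,z)$: making sure the super-sign in the braiding $\be_{V^{**},V}$ yields exactly the action of $z$, rather than its inverse or a global sign. Since $z^2=1$, the inverse issue is harmless. I would first check the sign on the one-dimensional odd representation of $\Rep(\bZ/2\bZ,z)=\mathrm{sVec}$, where $\tr(u)=-1$ is standard, and then argue eigenspace-wise in general.
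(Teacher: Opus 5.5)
Your proof is correct, and it takes a somewhat different route from the paper.

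\textbf{The paper's route.} The paper first notes that $\FPdim(\cE)\in\bZ$, so $\cE$ is pseudounitary. It then passes to the canonical spherical structure $\piv_0$, for which $d^{\piv_0}_a=\FPdim(a)>0$. Using the pivotal independence of $d^\piv_a\theta^\piv_a$, the condition becomes $\theta^{\piv_0}_a>0$, i.e.\ $\theta^{\piv_0}_a=1$, since twists of simples in a symmetric category are $\pm1$. Finally it invokes the standard fact that a symmetric fusion category is Tannakian if and only if its canonical twist is trivial.

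\textbf{Your route.} You bypass pseudounitarity and the canonical structure altogether. You evaluate the pivotal-independent scalar $\tr(u_a)$ directly in Deligne's models, after checking that it is invariant under braided equivalence.

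\textbf{The sign check.} Your computation is right. Using the standard vector-space $\ev$ and $\coev$, a homogeneous $v$ maps to $\sum_i(-1)^{|v_i||v|}v^i(v)\,\hat v_i=(-1)^{|v|}\hat v$. So $u_V$ is the canonical map composed with $z$, and $\tr(u_V)=\dim V_0-\dim V_1$.

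\textbf{Comparison.} Your computation is precisely what underlies the ``standard fact'' the paper quotes: it shows that the canonical twist of $\Rep(G,z)$ is the action of $z$. The paper's proof is thus a short reduction to known results. Yours is self-contained modulo Deligne's theorem. It also makes visible that neither sphericality of $\piv$ nor pseudounitarity plays any role: the criterion holds for an arbitrary pivotal structure, because only $\tr(u_a)$ enters.
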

\begin{proof}
Since $\FPdim(\cE)\in \bZ$, $\cE$ is pseudounitary. Denote the canonical spherical structure on $\cE$ by $\piv_0$, and the corresponding ribbon structure by $\theta^{\piv_0}$. Then $\cE$ is Tannakian if and only if $\theta_a^{\piv_0} = 1$ for all $a \in \irr(\cE)$, which is equivalent to
$$
d_a^\piv \theta_a^\piv = d_a^{\piv_0} \theta_a^{\piv_0} = d_a^{\piv_0} > 0
$$
by Corollary \ref{cor:pu-ad}.
\end{proof}

Recall the notion of minimal modular extension of a ribbon fusion category from \cite{Mug03S}.

\begin{lemma} \label{l:integral}
Let $\CC$ be a modular fusion category and let $G$ be its group of invertible objects. If $\CC_\ad$ is isotropic, then $\cC$ is pseudounitary, and $\cC_{\ad}$ is braided equivalent to $\Rep(\hat{H})$, where $H$ is the M\"uger centralizing subgroup of $G$, i.e., $H = \Irr(\cZ_2(G, \cC_\pt))$. In particular, $\CC$ is a minimal modular extension of the pointed ribbon category $\CC(G, q)$ where the quadratic form $q: G \to \BC^\times$ is given by $q(g) = \theta_g$. 
\end{lemma}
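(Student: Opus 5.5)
The plan is to reduce everything to the double-centralizer relation between $\cC_\ad$ and $\cC_\pt$ in a modular category. I will use three standard facts.

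\begin{itemize}
\item By \cite{GN08}, for modular $\cC$ we have $\zmug(\cC_\ad, \cC) = \cC_\pt$ and $\zmug(\cC_\pt, \cC) = \cC_\ad$.
\item By M\"uger's double centralizer theorem \cite{Mug032}, for any fusion subcategory $\cD \subset \cC$ one has $\dim(\cD)\cdot\dim(\zmug(\cD,\cC)) = \dim(\cC)$, and the analogous identity holds for $\FPdim$.
\item Since $\theta$ is trivial on $\cC_\ad$, the twist equation \eqref{eq:tw-def} shows that $\cC_\ad$ is symmetric. Hence $\cC_\ad \subset \zmug(\cC_\ad,\cC) = \cC_\pt$.
\end{itemize}

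The third point is the heart of the argument: the adjoint subcategory is automatically pointed. Consequently
\[
\cC_\ad = \cC_\pt \cap \zmug(\cC_\pt,\cC) = \zmug(\cC_\pt, \cC_\pt),
\]
and its group of simple objects is exactly $H = \irr(\zmug(G, \cC_\pt))$.

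For pseudounitarity, note that $\FPdim(\cC_\ad) = |H|$ is an integer. By the double centralizer identity, $\FPdim(\cC) = \FPdim(\cC_\pt)\,\FPdim(\cC_\ad) = |G|\,|H| \in \bZ$, so $\cC$ is integral. An integral (indeed weakly integral) fusion category is pseudounitary by \cite[Prop.~8.23]{ENO05}, so $\cC$ is pseudounitary.

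Next I show that $\cC_\ad$ is Tannakian. Corollary \ref{cor:pu-ad}, applied to the spherical structure $\piv^\theta$ of $\cC$, gives $d_v = \FPdim(v) > 0$ for every $v \in \cC_\ad$. Together with $\theta_v = 1$ this gives $d_v\theta_v > 0$ on $\irr(\cC_\ad)$. Lemma \ref{cor:Tan_cri} then says $\cC_\ad$ is Tannakian, i.e. braided equivalent to $\Rep(K)$ for some finite group $K$. Because $\cC_\ad$ is pointed with group $H$, the group $K$ must be abelian, and its group of characters is $H$. Hence $K \cong \hat H$ and $\cC_\ad \simeq \Rep(\hat H)$.

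For the minimal modular extension statement, equip $\cC_\pt$ with the restricted braiding and twist; this is $\cC(G,q)$ with $q(g)=\theta_g$. Its M\"uger center is $\zmug(\cC_\pt,\cC_\pt) = \cC_\ad$, which has dimension $|H|$. The double centralizer identity gives
\[
\dim(\cC) = \dim(\cC_\pt)\,\dim(\zmug(\cC_\pt,\cC)) = |G|\,|H| = \dim(\cC_\pt)\,\dim(\zmug(\cC_\pt)).
\]
This is precisely M\"uger's dimension condition for a modular category containing $\cC_\pt$ to be a minimal modular extension \cite{Mug03S}.

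The step I expect to need the most care is the positivity $d_v > 0$ on $\cC_\ad$. Lemma \ref{cor:Tan_cri} genuinely needs this, since a symmetric category with trivial twist can still be super Tannakian if dimensions are negative. My plan handles it by first proving pseudounitarity through integrality and only then invoking Corollary \ref{cor:pu-ad}. So I must make sure the integrality argument does not itself presuppose positivity; it does not, because it uses only Frobenius–Perron dimensions.
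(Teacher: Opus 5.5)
Your proof is correct and takes the same route as the paper. Symmetry of $\cC_\ad$ gives $\cC_\ad\subset\zmug(\cC_\ad,\cC)=\cC_\pt$. The product formula then gives $\FPdim(\cC)=|G|\,|H|\in\bZ$, hence pseudounitarity, and Corollary~\ref{cor:pu-ad} with Lemma~\ref{cor:Tan_cri} yields $\cC_\ad\simeq\Rep(\hat H)$. Your explicit identification $\cC_\ad=\zmug(\cC_\pt,\cC_\pt)$ and your dimension check for minimality just spell out what the paper leaves implicit.

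The only slip is terminological. $\FPdim(\cC)\in\bZ$ makes $\cC$ weakly integral rather than integral, but that is all \cite[Prop.~8.24]{ENO05} needs.
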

\begin{proof}
Since $\CC_\ad$ is isotropic, the twist equation \eqref{eq:tw-def} implies $\CC_\ad$ is symmetric. 
Hence, by Deligne's theorems \cite{Del02}, $\CC_\ad$ is either Tannakian or super-Tannakian. In particular, $\FPdim(\CC_{ad}) \in \bZ$.  Since $\cC$ is modular,
$\cZ_2(\CC_\ad, \cC)=\CC_\pt$ by \cite[Cor.\ 3.27]{DGNO-BFC}, and so $\CC_\ad \subset \CC_\pt$. Thus, $\CC_\ad$ is also pointed, and $H:= \irr(\CC_{\ad})$ is a subgroup of $G$. By \cite[Cor.~3.14]{DGNO-BFC}, we have 
\[\FPdim(\CC) = \FPdim(\CC_\ad)\FPdim(\CC_\pt) = |H| \cdot |G| \in \BZ\,.\] 
Thus, $\cC$ is pseudounitary by \cite[Prop.~8.24]{ENO05}, and a minimal modular extension of $\CC(G, q)$. It follows from  Corollary \ref{cor:pu-ad} that $d_v > 0$ for all $v \in \cC_\ad$. Thus $d_v \theta_v =1$ for all $v \in H$,  and so $\cC_\ad$ is Tannakian by Corollary \ref{cor:Tan_cri}, and braided equivalent to $\Rep(\hat{H})$.
\end{proof}

\begin{thm} \label{t:iso_adj}
Let $\cC$ be a modular fusion category.	Then the following statements are equivalent: 
\begin{enumerate}
    \item $\cC_{\ad}$ is isotropic;
    \item $\cC$ is braided equivalent to a modular fusion subcategory of $\zdr(\Vec_{L}^\w)$ for some finite abelian group $L$ and 3-cocycle $\w \in Z^3(L, \bC^\times)$;
    \item  $\cC_\ad$ is pointed, Tannakian,  and 
$$
\zdr(\Vec_{G}^\w) \stackrel{\br \o}{\simeq}\CC\boxtimes(\CC_A^0)^{\rev}\,,
$$
where $A$ is the regular algebra of $\cC_{\ad}$ and $G=\irr(\cC_{\pt})$;
    \item $\cC$ is a minimal modular extension of a pointed ribbon fusion category with isotropic M\"uger center.
\end{enumerate}
\end{thm}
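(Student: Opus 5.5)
I will prove the cycle (a) $\Rightarrow$ (c) $\Rightarrow$ (b) $\Rightarrow$ (a), and separately (a) $\Leftrightarrow$ (d).

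\textbf{(a) $\Rightarrow$ (c).} Lemma \ref{l:integral} shows that $\cC_\ad$ is pointed and Tannakian, braided equivalent to $\Rep(\hat H)$ with $H=\irr(\cC_\ad)\subset G=\irr(\cC_\pt)$. Let $A$ be the regular algebra of $\cC_\ad$, so $\cC_A^0$ is the de-equivariantization. By \cite{DGNO-BFC}, $\cC_A^0$ is modular of dimension $\dim\cC/|H|^2=|G|/|H|$. It is also pointed. Indeed, $\cC_A$ is graded by $\hat{\cC_\ad}$-type data and the free-module functor sends $\cC_\pt$ onto $\cC_A^0\cap(\text{image})$. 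More concretely, $\cC_A^0$ is the image of $\zmug(\cC_\ad,\cC)=\cC_\pt$. Hence $\cC\boxtimes(\cC_A^0)^{\rev}$ is modular and contains the Lagrangian (Tannakian) subcategory $\cL$ obtained from the diagonal of $\cC_\pt$ over $\cC_A^0$. Its dimension is $|G|^2=\dim\cC\cdot\dim\cC_A^0$, using $\dim\cC=|G||H|$ from Lemma \ref{l:integral}. By the DGNO characterization of Drinfeld centers (a modular category with a Lagrangian subcategory $\Rep(K)$ is $\zdr(\Vec_{\hat K}^\w)$), we get $\cC\boxtimes(\cC_A^0)^{\rev}\simeq\zdr(\Vec_{\Gamma}^\w)$ with $|\Gamma|=|G|$.

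\textbf{Main obstacle.} The key step is identifying $\Gamma$ with $G$, which I expect to be the hardest part. The Lagrangian algebra is built from $\cC_\pt\boxtimes\cC_A^0$, and its simples are parametrized by pairs $(g,\bar g)$ with $g\in G$, so $\cL$ is pointed with group $\cong\hat G$. Hence $\Gamma\cong G$.

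\textbf{(c) $\Rightarrow$ (b).} This is immediate once we know $G$ is abelian, since $\cC$ is a modular subcategory of $\cC\boxtimes(\cC_A^0)^{\rev}\simeq\zdr(\Vec_G^\w)$ by Müger's decomposition.

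\textbf{(b) $\Rightarrow$ (a).} Let $\cD=\zdr(\Vec_L^\w)$ with $L$ abelian. Its simples are pairs $(x,\chi)$ with $x\in L$ and $\chi$ a projective character for the cocycle $\w_x$. The forgetful functor to $\Vec_L^\w$ and the fact that $L$ is abelian show that $\cD_\ad$ lies in the kernel of the forgetful grading, so every simple of $\cD_\ad$ has the form $(e,\chi)$. These are the objects of $\Rep(L)\subset\cD$, which is Tannakian with trivial twist. Since $\cC\subset\cD$ implies $\cC_\ad\subset\cD_\ad$, the category $\cC_\ad$ is isotropic. To make "$\cD_\ad\subset\Rep(L)$" rigorous, I would note that $(x,\chi)\otimes(x,\chi)^*$ is supported over $x x^{-1}=e$.

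\textbf{(a) $\Rightarrow$ (d).} This is Lemma \ref{l:integral} together with $\zmug(\cC_\pt)=\zmug(\cC_\pt,\cC)\cap\cC_\pt=\cC_\ad$, which is isotropic.

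\textbf{(d) $\Rightarrow$ (a).} Suppose $\cC\supset\cP$ with $\cP$ pointed, $\zmug(\cP)$ isotropic, and $\dim\cC=\dim\cP\cdot\dim\zmug(\cP)$. Then $\zmug(\cP,\cC)$ has dimension $\dim\zmug(\cP)$ and contains $\zmug(\cP)$, so the two are equal. Taking centralizers gives $\cP=\zmug(\zmug(\cP),\cC)$. Since $\cP$ is pointed, $\cP\subset\cC_\pt$, hence $\cC_\ad=\zmug(\cC_\pt,\cC)\subset\zmug(\cP,\cC)=\zmug(\cP)$, which is isotropic.
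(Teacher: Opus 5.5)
Your proposal is correct in outline, but your proof of (a)$\Rightarrow$(c) takes a different route from the paper's.

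\textbf{The paper's route.} The paper works with the category $\cC_A$ of \emph{all} right $A$-modules. It uses the universal $\hat G$-grading of $\cC$, whose trivial component is $\cC_\ad\ni A$, and picks a simple submodule $M_\chi\subseteq x\otimes A$ for each $x\in\cC_\chi$. These lie in distinct graded pieces, so the count $|G|=\FPdim(\cC_A)\ge\sum_\chi \FPdim_A(M_\chi)^2\ge |G|$ forces $\cC_A\simeq\Vec_{\hat G}^{\omega}$. Then \cite[Cor.~3.30]{DMNO}, $\zdr(\cC_A)\simeq \cC\boxtimes(\cC_A^0)^{\rev}$, gives (c) directly. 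It even exhibits the pointed category whose center this is.

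\textbf{Your route.} You work only with local modules. First, $\cC_A^0\simeq(\cC_\pt)_{\hat H}$ is pointed with group $G/H$. Second, the graph $\{g\boxtimes F(g)\}$ of the free-module functor $F\colon\cC_\pt\to\cC_A^0$ is a fusion subcategory of dimension $|G|=\sqrt{\dim(\cC\boxtimes(\cC_A^0)^{\rev})}$. Third, the DGNO recognition theorem for twisted doubles finishes the argument. This avoids the grading/FP-counting argument and DMNO's Cor.~3.30.

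\textbf{What you still owe in this step.} You must actually check that the graph is Tannakian, which you only assert. It does hold, for these reasons:
\begin{itemize}
\item $F$ is braided and twist-preserving on $\zmug(\cC_\ad,\cC)=\cC_\pt$; this uses $\theta_A=\id_A$.
\item Hence, in the reversed factor, the double braidings and twists cancel.
\item Finally, $d_{g\boxtimes F(g)}=1>0$.
\end{itemize}

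\textbf{Your ``main obstacle'' is not one.} The group of simples of $\cL$ is $G$ itself, not $\hat G$. Since $\cL\simeq\Rep(K)$ with $K$ abelian, $\hat K\cong G$, and hence $K\cong G$. Note also that the recognition theorem gives $\zdr(\Vec_K^\omega)$, not $\zdr(\Vec_{\hat K}^\omega)$, though for abelian $K$ this is immaterial.

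\textbf{An omission in (b)$\Rightarrow$(a).} Hypothesis (b) is only a \emph{braided} equivalence. So the ribbon structure of $\cC$ need not be the one transported from the canonical ribbon structure of $\zdr(\Vec_L^\omega)$, which is the one for which $\Rep(L)$ has trivial twist. The paper closes this gap by noting that $\cC$ is integral, hence pseudounitary. It then invokes Corollary~\ref{cor:pu-ad}: twists on the adjoint subcategory are independent of the spherical structure. You need this same step.

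Apart from that, your grading argument $\cD_\ad\subseteq\cD_e=\Rep(L)$ is a fine substitute for the paper's centralizer chain $\cD_\ad=\zmug(U,\cD)\subseteq\zmug(\hat L,\cD)=\Rep(L)$, where $U$ is the group of invertibles of $\cD$. Your (a)$\Leftrightarrow$(d) matches the paper's argument.
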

\begin{proof} (a) $\Rightarrow$ (c)
Assume $\cC_{\ad}$ is isotropic. Let $G$ be group of invertible objects of $\cC$. Then by Lemma \ref{l:integral}, $\CC$ is pseudounitary  and $\cC_{\ad} \simeq \Rep(\hat H)$ as braided fusion categories, where $H=\{h \in G\mid \beta_{g,h}\beta_{h,g} =\id_{hg} \text{ for all }g \in G\}$. In view of Corollary \ref{cor:pu-ad}, we may equip $\cC$ with the canonical spherical structure. The regular algebra $A$ of $\cC_{\ad}$ is a connected \'etale algebra in $\cC_{\ad}$ (see \cite[Sec.~3]{DMNO} for details), and $\FPdim(A) = |H| = \FPdim(\cC_{\ad})$. Let $\CC_A$ denote the category of right $A$-modules in $\cC$. Then
$$
\FPdim(\CC_{A}) = \FPdim(\cC)/\FPdim(A) = |G|.
$$
Note that $\CC$ is faithfully $\hat{G}$-graded 
$
\CC = \bigoplus_{\chi\in \hat{G}} \CC_{\chi} 
$, 
where 
\[\irr(\CC_\chi) = \{x \in \irr(\cC)\mid \chi(g) = \frac{S_{x,g}}{d_x} \text{ for all } g \in G \}\,.\]
In particular,  $\CC_{\e} = \CC_{\ad}$, where $\e$ is the trivial character on $G$, and $A \in \CC_\e$.

Recall that $-\o A$ is the left adjoint of the forgetful functor $F: \CC_A \to \CC$. 
For any $x \in \CC_\chi$, $x\ot A \in \CC_\chi$, and let $M_\chi \in \CC_A$ be a simple $A$-submodule of $x \o A$. Obviously, $F(M_\chi) \in \CC_\chi$, and so $\{M_\chi\}_{\chi \in \hat{G}}$ is a set of nonisomorphic simple $A$-modules. Let $\FPdim_A(M_\chi)$ denote the Frobenius-Perron dimension of $M_\chi$ in $\CC_A$. Then we have
$$
|G|= \FPdim(\CC_A) \ge \sum_{\chi \in \hat{G}}\FPdim_A(M_\chi)^2 \ge |G|
$$ 
which forces $\FPdim_A(M_\chi) = 1$ for $\chi \in \hat{G}$ and $\irr(\CC_A) =\{M_\chi\mid \chi \in \hat{G}\}$. Therefore, $\CC_A \stackrel{\ot}{\simeq} \Vec_{\hat G}^\w$ for some $\w \in Z^3(\hat G,\bC^\times)$. Hence, by \cite[Cor.~3.30]{DMNO}, we have a braided equivalence
\[\zdr(\Vec_{\hat{G}}^{\w}) \cong \zdr(\cC_A) \cong \cC \bt (\cC_A^{0})^{\rev}\]
where $\cC_A^0$ is the category of local A-modules in $\cC$. Now (c) follows as $\hat{G} \cong G$ as groups.

(c) obviously implies (b), and so we proceed to prove (b) $\Rightarrow$ (a). Suppose $\cC$ is braided equivalent to a fusion subcategory of $\zdr(\Vec_{L}^\w)$ for some finite abelian group $L$ and  $\w \in Z^3(L, \bC^\times)$. Then $\CC$ is integral and hence pseudounitary. By Corollary \ref{cor:pu-ad}, we may 
endow $\cC$ and $\zdr(\Vec_{L}^\w)$ with the canonical spherical structure, and the associated ribbon structures. Since $L$ is abelian, $\irr(\Rep(L))=\hat{L}$ can be identified as a subgroup of the group $U$ of invertible objects of $\zdr(\Vec_{L}^\w)$. Then, 
$$
\cC_{\ad} \subseteq (\zdr(\Vec_{L}^\w))_{\ad} = \cZ_2(U, \zdr(\Vec_{L}^\w)) \subseteq \cZ_2(\hat{L}, \zdr(\Vec_{L}^\w)) = \Rep(L) 
$$
which is isotropic. Therefore, $\cC_{\ad}$ is isotropic. 

By Lemma \ref{l:integral}, (a) implies (d). Conversely, let $\cC$ be a minimal modular extension of a pointed ribbon fusion category $\cC(G, q)$ with isotropic M\"uger center, then 
\[\cC_{\ad} = \cZ_2(\cC_\pt, \cC) \subset \cZ_2(G, \cC) = \cZ_2(G, \cC(G, q))\] 
where the third equality follows from the definition of minimal modular extension. Therefore, $\cC_{\ad}$ is isotropic.
\end{proof}

\begin{remark}
(i) The modular category $(\CC_A^0)^{\rev}$ in Theorem \ref{t:iso_adj} was completely identified in its proof. Since $\cC_\pt$ is a ribbon category, $\cC_\pt = \CC(G,q)$ where the quadratic form $q: G \to \bC^\times$ is given by $q(g) = \theta_g$ for $g \in G$.  The quadratic form $q$ induces a nondegenerate quadratic form, again denoted by $q$, on the quotient $G/H$ as $H$ is a maximal isotropic subgroup. The modular category $(\cC_A^0)^{\rev}$ is braided equivalent to $\CC(G/H,\ol q)$, where $\ol q$ is the complex conjugation of $q$.  

(ii) There exist some nonabelian groups $G$ and 3-cocycles $\w \in Z^3(G, \BC^\times)$ such that $\zdr(\Vec_G^\w)$ have isotropic adjoints as long as $\Vec_G^\w$ is Morita equivalent to  $\Vec_L^{\w'}$ for some abelian group $L$. For example, there exist some 3-cocycles $\w$ of any extraspecial 2-groups $E$ such that $\zdr(\Vec_E^\w)_\ad$ is isotropic  (cf. \cite[Thm. 4.6 and Thm. 5.2]{GMN07}).
\end{remark}

\begingroup
\small
\bibliographystyle{abbrv}
\bibliography{ref}
\endgroup

\authorinfo

\end{document}

%% file: Macros.tex
\usepackage{amsmath}
\usepackage{amsfonts,amssymb,amsthm,color,dsfont,amsbsy}
\usepackage{mathrsfs}

\usepackage[T1]{fontenc}
\usepackage{mlmodern} 

\usepackage{caption}
\makeatletter
\renewcommand{\@seccntformat}[1]{\csname the#1\endcsname.\ }  % add dot visually
\makeatother

\makeatletter
\def\numberline#1{\hb@xt@\@tempdima{#1.\hfil}}
\makeatother
\usepackage[dvipsnames]{xcolor} 
\definecolor{bleudefrance}{rgb}{0.19, 0.35, 0.9}

\usepackage{geometry}
\usepackage{hyperref}
\hypersetup{
  colorlinks = true,
  allcolors = blue,
}

\usepackage[all, knot, color]{xy}

\usepackage{amscd}
\usepackage{mathtools}
\usepackage{bm}
\usepackage{eucal}
\usepackage{extpfeil}
\usepackage{float}

\usepackage{tikz}
\usetikzlibrary{cd, calc, knots}
\usetikzlibrary{shapes.geometric}
\usetikzlibrary{decorations, decorations.markings} 
\usetikzlibrary{arrows, arrows.meta}
\tikzset{
    partial ellipse/.style args={#1:#2:#3}{
        insert path={+ (#1:#3) arc (#1:#2:#3)}
    }
}

\tikzset{->-/.style={decoration={
markings,
mark=at position #1 with {\arrow{>}}},postaction={decorate}}}
\tikzset{-<-/.style={decoration={
markings,
mark=at position #1 with {\arrow{<}}},postaction={decorate}}}
\tikzset{-<<-/.style={decoration={
markings,
mark=at position #1 with {\arrow{-latex}}},postaction={decorate}}}

\usepackage{graphicx}

\usepackage{enumitem}
\setlist[enumerate,1]{label={\rm (\alph*)}, topsep=.2em, noitemsep=1em, leftmargin=35pt, labelsep=7pt}
\newtheorem{thm}{Theorem}[section]

\newtheorem{cor}[thm]{Corollary}

\newtheorem{lemma}[thm]{Lemma}

\theoremstyle{definition}
\newtheorem{exmp}[thm]{Example}
\newenvironment{example}{\pushQED{\qed}\exmp}{\popQED\endexmp}

\newtheorem{defn}[thm]{Definition}
\newenvironment{definition}{\pushQED{\qed}\defn}{\popQED\enddefn}
\newtheorem{rmk}[thm]{Remark}
\newenvironment{remark}{\pushQED{\qed}\rmk}{\popQED\endrmk}

\numberwithin{equation}{section}

\def\cC{\mathcal{C}}
\def\cD{\mathcal{D}}
\def\cE{\mathcal{E}}
\def\cF{{\mathcal{F}}}

\def\cZ{\mathcal{Z}}
\def\bC{{\mathbb{C}}}

\def\bN{{\mathbb{N}}}

\def\bR{{\mathbb{R}}}
\def\bS{{\mathbb{S}}}

\def\bZ{{\mathbb{Z}}}
\def\ld{{\lambda}}

\def\be{{\beta}}
\def\e{{\varepsilon}}

\def\w{{\omega}}
\def\om{{\omega}}

\def\o{{\,\otimes\,}}
\def\s{{\sigma}}

\def\fA{\mathfrak{A}}

\def\fN{{\mathfrak{N}}}

\def\smat{{\begin{pmatrix}0&-1\\1&0\end{pmatrix}}}
\def\tmat{{\begin{pmatrix}1&1\\0&1\end{pmatrix}}}
\def\1{{\mathds{1}}}
\def\to{\rightarrow}

\def\Mp2{{\operatorname{Mp}_2(\BZ)}}
\def\PSU{{\operatorname{PSU}}}

\def\Sp{{\operatorname{Sp}}}
\def\op{{\operatorname{op}}}
\def\pt{{\operatorname{pt}}}
\def\ad{{\operatorname{ad}}}
\def\br{{\operatorname{br}\,}}

\def\Aut{{\operatorname{Aut}}}

\def\Rep{{\operatorname{Rep}}}

\newcommand\Irr{{\operatorname{Irr}}}
\def\irr{{\operatorname{Irr}}}

\def\id{{\operatorname{id}}}

\def\tr{{\operatorname{tr}}}

\def\ord{{\operatorname{ord}\,}}

\def\GL{{\operatorname{GL}}}

\def\Vec{{\operatorname{Vect}}}

\def\diag{{\operatorname{diag}}}

\def\ev{{\operatorname{ev}}}
\def\coev{{\operatorname{coev}}}

\renewcommand\subset{ \subseteq }

\usepackage{diagbox}
\usepackage{graphics}

\newcommand\ol[1]{\overline{#1}}

\newcommand{\bt}{\boxtimes}

\newcommand{\ot}{\otimes}

\newcommand{\FPdim}{\operatorname{FPdim}}

\def\op{{\operatorname{op}}}

\def\x{\times}

\def\SL{{\operatorname{SL}}}

\usepackage{leftidx}
\usepackage{tcolorbox}

\newcommand{\rev}{\operatorname{rev}}
\renewcommand{\ker}{\operatorname{Ker}}

\newcommand{\FP}{\FPdim}

\newcommand{\BC}{\mathbb{C}}

\newcommand{\BZ}{\mathbb{Z}}

\newcommand{\CC}{{\mathcal{C}}}
\newcommand{\DD}{\mathcal{D}}

\newcommand{\FF}{\mathcal{F}}

\renewcommand{\ker}{\operatorname{ker}}

\def\o{\otimes}
\def\rev{\operatorname{rev}}
\def\w{\omega}

\newcommand{\PGL}{\operatorname{PGL}}

\newcommand{\MCG}[1]{\operatorname{Mod}(\Sigma_{#1})}
\newcommand{\mcg}[1]{\operatorname{Mod}(\Sigma_{#1})}
\newcommand{\torelli}[1]{\mathscr{T}_{#1}}
\def\zmug{\cZ_{2}}
\def\zdr{{\CMcal{Z}}}

\def\isotopy{\operatorname{isotopy}}

\def\ITwoGen{\varpi}
\def\dt{{\CMcal{T}}}
\def\db{{\CMcal{B}}}
\def\cmcs{{\CMcal{S}}}

\newcommand{\RT}{\operatorname{RT}}

\newcommand{\RTspace}[2]{\RT_{#1}(\Sigma_{#2})}

\newcommand{\RTmat}[2]{O_{{#1}, {#2}}}
\newcommand{\RTrep}[2]{\rho_{{#1}, {#2}}}

\newcommand{\ncg}[2]{n_{{#1}, {#2}}}

\newcommand{\AdSet}[1]{I^{(#1)}}

\newcommand{\hmu}{\hat{\mu}}
\newcommand{\cmu}{\check{\mu}}
\newcommand{\Wbase}[4]{{\hmu_{#1}(\bm{#2}, \bm{#3}, \bm{#4})}}
\def\cyl{\operatorname{cyl}}

\def\ot{{\otimes}}

\def\Aut{{\operatorname{Aut}}}

\def\e{{\varepsilon}}
\def\piv{\varkappa}
\def\PGLC{{\PGL(\ncg{\cC}{g}, \bC)}}
\def\Spg{{\Sp(2g,\bZ)}}
\newcommand{\DualDef}{
\coev_x = \vcenter{\hbox{\scalebox{0.6}{
\begin{tikzpicture}[line width=2pt]
\draw[->-=.5] (0,0) arc(0:180:1);
\node[below] at (0,0) {\Large $x^*$};\node[below] at (-2,0) {\Large $x$};
\end{tikzpicture}}}}: \1 \to x \ot x^{*}
\quad\text{and}\quad
\ev_x= \vcenter{\hbox{\scalebox{0.6}{
\begin{tikzpicture}[line width=2pt]
\draw[->-=.5] (0,0) arc(0:-180:1);
\node [above] at (0,0) {\Large $x$}; \node [above] at (-2,0) {\Large $x^*$};
\end{tikzpicture}}}}: x^* \ot x \to \1}
\newcommand{\Brd}{
\vcenter{\hbox{\scalebox{0.6}{
\begin{tikzpicture}[line width=2pt]
\begin{knot}  
\strand[out=down,in=up] (0,0) to (1.3,-2);
\strand[out=down,in=up] (1.3,0) to (0,-2);
\node[above] at (0,0) {\Large  $x$};
\node[above] at (1.3,0) {\Large  $y$};
\end{knot}
\end{tikzpicture}}}}}
\newcommand{\BrdInv}{\vcenter{\hbox{\scalebox{0.6}{
\begin{tikzpicture}[line width=2pt]
\begin{knot}[yscale=-1]  
\strand[out=down,in=up] (0,0) to (1.3,-2);
\strand[out=down,in=up] (1.3,0) to (0,-2);
\node[above] at (0,-2) {\Large  $y$};
\node[above] at (1.3,-2) {\Large  $x$};
\end{knot}
\end{tikzpicture}}}}}
\newcommand{\TrivalentVertexCheck}{
\vcenter{\hbox{\scalebox{0.6}{
\begin{tikzpicture}[line width=2pt]
\draw (0,0) -- ++(45:1.5)
      (0,0) -- ++(135:1.5)
      (0,0) -- (0,-1);
\draw ([shift=(45:0.3cm)]0,0) arc (45:135:0.3cm);
\fill[black] (0,0) circle (3pt);
\node[above] at (0,0.3) {\Large $j$};
\node[above] at (-1.2, 1) {\Large $b$};
\node[above] at (1.2, 1) {\Large $c$};
\node[below] at (0, -1) {\Large $a$};
\end{tikzpicture}}}}}
\newcommand{\BranchDown}[1]{
\begin{scope}[shift={#1}]
\draw (0,0)--(-1,-1); \draw (0,0)--(1,-1);
\draw ([shift=(225:0.3cm)]0,0) arc (225:315:0.3cm);
\fill[black] (0,0) circle (3pt);
\end{scope}}
\newcommand{\TrivalentVertexHat}{
\vcenter{\hbox{\scalebox{0.6}{
\begin{tikzpicture}[line width=2pt]
\draw (0,0) -- ++(225:1.5)
      (0,0) -- ++(315:1.5)
      (0,0) -- (0,1);
\draw ([shift=(225:0.3cm)]0,0) arc (225:315:0.3cm);
\node[below] at (0,-0.3) {\Large $j$};
\node[below] at (-1.2, -1) {\Large $b$};
\node[below] at (1.2, -1.1) {\Large $c$};
\node[above] at (0, 1) {\Large $a$};
\fill[black] (0,0) circle (3pt);
\end{tikzpicture}}}}}
\newcommand{\StraightLine}[1]{
\vcenter{\hbox{\scalebox{0.6}{
\begin{tikzpicture}[line width=2pt]
\draw (0,0) -- (0,3);
\node[above] at (0,3) {\Large $#1$};
\end{tikzpicture}}}}}
\newcommand{\CupCap}{
\vcenter{\hbox{\scalebox{0.6}{
\begin{tikzpicture}[line width=2pt]
\draw (0,0) -- ++(45:1.5)
      (0,0) -- ++(135:1.5)
      (0,0) -- (0,-1);
\draw ([shift=(45:0.3cm)]0,0) arc (45:135:0.3cm);
\node[above] at (0,0.3) {\Large $j$};
\node[above] at (-1.2, 1) {\Large $b$};
\node[above] at (1.2, 1) {\Large $c$};
\fill[black] (0,0) circle (3pt);
\node[right] at (0,-0.5) {\Large $a$};
\draw (0,-1) -- ++(225:1.5)
      (0,-1) -- ++(315:1.5)
      (0,-1) -- (0,0);
\draw ([shift=(225:0.3cm)]0,-1) arc (225:315:0.3cm);
\node[below] at (0,-1.3) {\Large $j$};
\node[below] at (-1.2, -2) {\Large $b$};
\node[below] at (1.2, -2.13) {\Large $c$};
\fill[black] (0,-1) circle (3pt);
\end{tikzpicture}}}}}
\newcommand{\CapCup}{
\vcenter{\hbox{\scalebox{0.6}{
\begin{tikzpicture}[line width=2pt]
\draw (0,0) -- ++(45:1.5) -- ++(135:1.5) node(a1){} 
        -- ++(90:1) node(a2){}
      (0,0) -- ++(135:1.5) -- ++(45:1.5)
      (0,0) -- (0,-1);
\draw ([shift=(45:0.3cm)]0,0) arc (45:135:0.3cm);
\draw (a1) +(225:0.3cm) arc (225:315:0.3cm);
\node[above] at (0,0.3) {\Large $k$};
\node[above] at (-1.2, 1) {\Large $b$};
\node[above] at (1.2, 1) {\Large $c$};
\fill[black] (0,0) circle (3pt);
\fill[black] (a1) circle (3pt);
\node[below] at (0,-1) {\Large $a$};
\node[above] at (a2) {\Large $a$};
\node[below] at ($(a1) + (0,-0.3)$) {\Large $j$};
\end{tikzpicture}}}}}
\newcommand{\HoleWithCurve}[1]{
\begin{scope}[shift={#1}]
\hole{0,-0.5};
\draw[purple] (0, 0.21) arc (-90:90:0.3 and 0.87);
\draw[purple, dashed] (0, 0.21) arc (270:90:0.3 and 0.87);
\draw[blue] (0,0) ellipse (1.2 and 0.6);
\end{scope}}
\newcommand{\LeftEquator}[1]
{\begin{scope}[shift={#1}]
\draw[Green] (0.48,0) arc (180:90:1.52 and 0.6);
\draw[Green, dashed] (0.48,0) arc (180:270:1.52 and 0.6);    
\end{scope}}
\newcommand{\RightEquator}[1]{
\begin{scope}[shift={#1}]
\draw[Green] (-0.48,0) arc (0:90:1.52 and 0.6);
\draw[Green, dashed] (-0.48,0) arc (0:-90:1.52 and 0.6);        
\end{scope}}
\renewcommand{\hole}[1]{
\draw ([shift=(45:0.707)]$(#1)$) arc (45:135:0.707);
\draw ([shift=(210:0.707)]$(#1)+(0,1)$) arc (210:330:0.707)
}

\newcommand{\grayhole}[1]{
\draw[gray] ([shift=(45:0.707)]$(#1)$) arc (45:135:0.707);
\draw[gray] ([shift=(210:0.707)]$(#1)+(0,1)$) arc (210:330:0.707)
}